\newcolumntype{d}{D{.}{.}{-1}}
\theoremstyle {plain}
\newtheorem {thm}{Theorem}[section]
\newtheorem {prop}[thm]{Proposition}
\newtheorem {lem}[thm]{Lemma}
\theoremstyle {definition}
\newtheorem {defn}[thm]{Definition}
\theoremstyle {remark}
\newtheorem {rem}[thm]{Remark}
\newtheorem {exmp}[thm]{Example}
\DeclareMathOperator{\Hom}{Hom}
\DeclareMathOperator{\Ann}{Ann}
\DeclareMathOperator{\LM}{LM}
\DeclareMathOperator{\Ker}{Ker}
\DeclareMathOperator{\Spec}{Spec}
\DeclareMathOperator{\Sing}{Sing}
\DeclareMathOperator{\Quot}{Q}
\newcommand{\Q}{{\mathbb Q}}
\newcommand{\gen}[1]{\left\langle #1 \right\rangle}
\begin{document}

\bibliographystyle{alpha}

\title{Parallel Algorithms for Normalization}

\author{Janko B\"ohm}
\address{Janko B\"ohm\\ Department of Mathematics\\ University of Kaiserslautern\\
Erwin-\linebreak Schr\"odinger-Str.\\ 67663 Kaiserslautern\\ Germany}
\email{boehm@mathematik.uni-kl.de}

\author{Wolfram Decker}
\address{Wolfram Decker\\ Department of Mathematics\\ University of Kaiserslautern\\
Erwin-Schr\"odinger-Str.\\ 67663 Kaiserslautern\\ Germany}
\email{decker@mathematik.uni-kl.de}

\author{Gerhard Pfister}
\address{Gerhard Pfister\\ Department of Mathematics\\ University of Kaiserslautern\\
Erwin-Schr\"odinger-Str.\\ 67663 Kaiserslautern\\ Germany}
\email{pfister@mathematik.uni-kl.de}

\author{Santiago Laplagne}
\address{Santiago Laplagne\\ Departamento de Matem\'atica\\ FCEN\\ Universidad de 
Buenos Aires - Ciudad Universitaria\\ Pabell\'on I - (C1428EGA) - Buenos Aires\\ Argentina}
\email{slaplagn@dm.uba.ar}

\author{Andreas Steenpa{\ss}}
\address{Andreas Steenpa{\ss}\\ Department of Mathematics\\ University of Kaiserslautern\\ 
Erwin-Schr\"odinger-Str.\\ 67663 Kaiserslautern\\ Germany}
\email{steenpass@mathematik.uni-kl.de}

\author{Stefan Steidel}
\address{Stefan Steidel\\ Department of Mathematics\\ University of Kaiserslautern\\ 
Erwin-Schr\"odinger-Str.\\ 67663 Kaiserslautern\\ Germany}
\email{steidel@mathematik.uni-kl.de}

\keywords{normalization, integral closure, test ideal, Grauert--Remmert criterion, modular computation, 
parallel computation}

\date{\today}

\maketitle

\begin{abstract}
Given a reduced affine algebra $A$ over a perfect field $K$, we
present parallel algorithms to compute the normalization $\overline{A}$
of $A$. Our starting point is the algorithm of
\cite{GLS}, which is an improvement of de Jong's algorithm, see
\cite{deJong98, DGPJ}. First, we propose to stratify the singular
locus $\Sing(A)$ in a way which is compatible with normalization,
apply a local version of the normalization algorithm at each stratum,
and find $\overline{A}$ by putting the local results together.
Second, in the case where $K=\Q$ is the field of rationals,
we propose modular versions of the global and local--to--global algorithms.
We have implemented our algorithms in the computer algebra
system {\sc{Singular}} and compare their performance with that of
the algorithm of \cite{GLS}.
In the case where $K=\Q$, we also discuss the
use of modular computations of Gr\"obner bases, radicals, and primary
decompositions.
We point out that in most examples, the new algorithms
outperform the algorithm of \cite{GLS} by far, even if we do not run
them in parallel.
\end{abstract}

\section{Introduction} \label{secIntro}
 
Normalization is an important concept in commutative algebra, with
applications in algebraic geometry and singularity theory. We are interested
in computing the normalization $\overline{A}$ of a reduced affine $K$--algebra
$A$, where $K$ is a perfect field. For this, a number of algorithms have been
proposed, but not all of them are of practical interest
(see the historical account in \cite{GLS}). A milestone is de Jong's
algorithm, see \cite{deJong98, DGPJ}, which is based on the normality criterion of
\cite{GR}, and which has been implemented in \textsc{Singular}
(see \cite{DGPS}), \textsc{Macaulay2} (see \cite{GS}), and \textsc{{Magma}} 
(see \cite{BCP}). The algorithm of \cite{GLS} (\emph{GLS normalization
algorithm} for short), which is also based on the Grauert and Remmert
criterion, is an improvement of de Jong's algorithm. It is implemented in
\textsc{{Singular}}. The algorithm proposed by \cite{LP} and \cite{SS} is
designed for the characteristic $p$ case. It is implemented in
\textsc{{Singular}} and \textsc{{Macaulay2}} and works well for small $p$.

In view of modern multi--core computers, the parallelization of fundamental
algorithms becomes increasingly important. Our objective in this paper is to
present parallel versions of the GLS normalization algorithm in that we reduce
the general problem to computational problems which are easier and do not
depend on each other. It turns out that in most cases, the new algorithms
outperform the GLS algorithm by far, even if we do not run them in parallel.

We start in Section \ref{sect:glob-norm-alg} by reviewing the basic ideas of
the GLS algorithm. In particular, we recall the normality criterion of Grauert
and Remmert. In Section \ref{sect:norm-via-loc}, we present a local version of
the normality criterion which applies to a stratification of the singular
locus $\Sing(A)$ of $A$. This allows us to find $\overline{A}$ by a
local--to--global approach. Section \ref{secModMeth} contains a discussion of
modular methods for the GLS algorithm and its local--to--global version.
Timings are presented in Section \ref{sect:timings}.

\section{The GLS Normalization Algorithm} \label{sect:glob-norm-alg}

Referring to \cite{GLS} and \cite{GP} for details and proofs, we sketch the
GLS normalization algorithm. We begin with some general remarks. For these,
$A$ may be any reduced Noetherian ring.

\begin{defn}
Let $A$ be a reduced Noetherian ring. The {\emph{normalization}} of $A$,
written $\overline{A}$, is the integral closure of $A$ in its total ring of
fractions $\Quot(A)$. We call $A$ {\emph{normal}} if $A =\overline{A}$.
\end{defn}

We write
\[
\Spec(A)=\{P\subseteq A\mid P{\text{ prime ideal}}\}
\]
for the {\emph{spectrum}} of $A$ and $V(J)=\{P\in\Spec(A)\mid P\supseteq J\}$
for the {\emph{vanishing locus}} of an ideal $J$ of $A$. If $P\in
\operatorname{Spec}(A)$, then $A_{P}$ denotes the localization of $A$ at $P$.
More generally, if $S$ is a multiplicatively closed subset of $A$ and $M$ is
an $A$--module, then $S^{-1}M$ denotes the localization of $M$ at $S$. 

\vspace{0.1cm}

Taking into account that normality is a local property, we call
\[
N(A)=\{P\in\Spec(A)\mid A_{P}\mbox{ is not normal}\}
\]
the {\emph{non--normal locus}} of $A$. Furthermore, we write%
\[
\Sing(A)=\{P\in\Spec(A)\mid A_{P}\mbox{ is not regular}\}
\]
for the {\emph{singular locus}} of $A$. Then $N(A)\subseteq\Sing(A)$.

\begin{rem}
\label{rem:nnlocus-singlocus-dim-one} A Noetherian local ring of dimension one
is normal if and only if it is regular. See \cite[Theorem 4.4.9]{JP}.
\end{rem}

\begin{defn}
Let $A$ be a reduced Noetherian ring. The {\emph{conductor}} of $A$ in
$\overline{A}$ is the ideal
\[
\mathcal{C}_{A}= \Ann_{A}(\overline{A}/A) = \{ a\in A \mid a \overline{A}
\subseteq A\}.
\]

\end{defn}

%\noindent To emphasize the role of the conductor, we note:

\begin{lem}
\label{lemma:role-of-cond} Let $A$ be a reduced Noetherian ring. Then $N(A)
\subseteq V(\mathcal{C}_{A})$. Furthermore, $\overline{A}$ is module--finite
over $A$ if and only if $\mathcal{C}_{A}$ contains a non--zerodivisor of $A$.
In this case, $N(A) = V(\mathcal{C}_{A})$.
\end{lem}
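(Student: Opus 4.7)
The plan is to prove the three claims in order, with the middle equivalence being the structural heart and the other two essentially following by clearing denominators. The first inclusion $N(A)\subseteq V(\mathcal{C}_A)$ I would prove by contrapositive: if $P\in\Spec(A)$ with $\mathcal{C}_A\not\subseteq P$, pick $c\in\mathcal{C}_A\setminus P$. Then $c\overline{A}\subseteq A$, and $c$ becomes a unit in $A_P$, so for any $y\in\overline{A}$ we have $y=(cy)/c\in A_P$. Using that integral closure commutes with localization in the reduced Noetherian setting, this gives $\overline{A_P}=(\overline{A})_P\subseteq A_P$, hence $A_P$ is normal and $P\notin N(A)$.

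For the equivalence, the forward direction is routine: if $\overline{A}=Ax_1+\cdots+Ax_n$ with $x_i\in\overline{A}\subseteq\Quot(A)$, write $x_i=y_i/t_i$ with $t_i$ a non--zerodivisor in $A$, and the product $t=t_1\cdots t_n$ is a non--zerodivisor lying in $\mathcal{C}_A$. For the converse, if $c\in\mathcal{C}_A$ is a non--zerodivisor of $A$, then $c$ is invertible in $\Quot(A)$, so the map $\overline{A}\to A$ sending $y\mapsto cy$ is an injective $A$--module homomorphism whose image is contained in $A$. Thus $\overline{A}$ embeds as an $A$--submodule of the finitely generated $A$--module $A$; since $A$ is Noetherian, $\overline{A}$ is module--finite over $A$.

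For the final equality $N(A)=V(\mathcal{C}_A)$ under the module--finite hypothesis, only the inclusion $V(\mathcal{C}_A)\subseteq N(A)$ remains. Assume $P\supseteq\mathcal{C}_A$ but, for contradiction, $A_P$ is normal, i.e., $(\overline{A})_P=A_P$. Applied to generators $x_1,\ldots,x_n$ of $\overline{A}$, each $x_i$ viewed in $(\overline{A})_P$ lies in $A_P$, so there exist $t_i\in A\setminus P$ with $t_ix_i\in A$. Taking $t=t_1\cdots t_n\in A\setminus P$ gives $t\overline{A}\subseteq A$, hence $t\in\mathcal{C}_A\subseteq P$, contradiction.

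The main subtlety -- and the one step I would want to flag -- is the commutation of integral closure with localization used in Step 1, together with the identification of $(\overline{A})_P$ as the integral closure of $A_P$ in its own total ring of fractions. In the reduced Noetherian case this is standard, since $\Quot(A_P)$ and $(\Quot(A))_P$ have the same minimal primes after reducing, but it is really the only non--formal input; everything else is clearing denominators and exploiting the Noetherian hypothesis on $A$.
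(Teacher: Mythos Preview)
The paper does not actually prove this lemma: Section~2 opens by deferring all proofs in that section to \cite{GLS} and \cite{GP}, and the lemma is stated without argument. So there is no ``paper's own proof'' to compare against; your proposal stands on its own.

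Your argument is correct and is essentially the standard one. All three parts are handled properly: the contrapositive for $N(A)\subseteq V(\mathcal{C}_A)$, the two directions of the module--finiteness equivalence via clearing denominators and the Noetherian submodule argument, and the final equality by producing an element of $\mathcal{C}_A$ outside $P$ from a finite generating set. The one point you flag---commutation of normalization with localization for reduced Noetherian rings---is indeed the only non--formal ingredient, and it is standard (see, e.g., \cite[Prop.~4.13]{Eis} or \cite{SH}); in fact, for a reduced Noetherian ring one checks directly that $\Quot(A_P)\cong(\Quot A)_P$ since localizing at $P$ kills exactly the factors corresponding to minimal primes not contained in $P$, after which the commutation of integral closure with localization in a fixed overring applies.
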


%\noindent
%To emphasize the role played by the conductor, we note that $\overline{A}$ is
%finitely generated as an $A$-module iff $\mathcal C$ contains a non--zerodivisor
%$d$ on $A$.  Indeed, if $d$ is given, then $\overline{A}\cong d\overline{A}\subseteq A$
%is module-finite over $\overline{A}$ since $A$ is Noetherian. Conversely, given
%finitely many generators for $\overline{A}$ as an $A$-module, the least common
%multiple of their denominators is a non--zerodivisor $d$ as desired.
%The algorithm described in \cite{GLS10} works whenever such a non--zerodivisor
%is explicitly known. As with de Jong's algorithm (see \cite{deJong98}, \cite{DGPJ}),
%it is based on the normality criterion of Grauert and Remmert \cite{GR} below:
To state the aforementioned Grauert and Remmert criterion, we need:

\begin{lem}
\label{lemma:prep-GR} Let $A$ be a reduced Noetherian ring, and let
$J\subseteq A$ be an ideal containing a non--zerodivisor $g$ of $A$. Then the
following hold:

\begin{enumerate}
\item If $\varphi\in\Hom_{A}(J, J)$, then the fraction
%$\frac{\varphi(g)}{g}$
$\varphi(g)/g\in\overline{A}$ is independent of the choice of $g$, and
$\varphi$ is multiplication by $\varphi(g)/g$.

\item There are natural inclusions of rings
\[
A\subseteq\Hom_{A}(J, J)\cong\frac{1}{g} \, (gJ :_{A} J)\subseteq\overline
{A}\subseteq\Quot(A),\; a \mapsto\varphi_{a}, \; \varphi\mapsto\frac
{\varphi(g)}{g},
\]
where $\varphi_{a}: J \rightarrow J$ denotes the multiplication by $a \in A$.
\end{enumerate}
\end{lem}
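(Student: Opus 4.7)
The plan is to exploit $A$-linearity of $\varphi$ together with the fact that $g$ (and any other non-zerodivisor in $J$) is cancellable in $\Quot(A)$, which lets us transport $\varphi$ into a single multiplication operator on $\Quot(A)$.

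For part (1), I would start by taking another non-zerodivisor $g' \in J$ and applying $A$-linearity to the products $gg' \in J$ in two ways: $g\varphi(g') = \varphi(gg') = g'\varphi(g)$. Since $g,g'$ are non-zerodivisors, this gives the equality $\varphi(g)/g = \varphi(g')/g'$ in $\Quot(A)$, so the fraction depends only on $\varphi$. Next, for an arbitrary $j \in J$, the same computation $g\varphi(j) = \varphi(gj) = j\varphi(g)$ shows $\varphi(j) = (\varphi(g)/g)\cdot j$, so $\varphi$ is indeed multiplication by $\varphi(g)/g$ on $J$. To obtain integrality, I would invoke the standard determinant trick: $J$ is a finitely generated $A$-module (since $A$ is Noetherian), it is faithful as a module over $A[\varphi(g)/g] \subseteq \Quot(A)$ because any element annihilating $J$ kills the non-zerodivisor $g$ and hence vanishes, and multiplication by $\varphi(g)/g$ sends $J$ into $J$. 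Therefore $\varphi(g)/g$ satisfies a monic polynomial over $A$ and belongs to $\overline{A}$.

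For part (2), the inclusion $A \hookrightarrow \Hom_A(J,J)$ is the map $a \mapsto \varphi_a$; it is injective since $\varphi_a = 0$ forces $aJ = 0$, and the presence of a non-zerodivisor in $J$ forces $a=0$. The map $\varphi \mapsto \varphi(g)/g$ sends $\Hom_A(J,J)$ into $\Quot(A)$, and by part (1) its image lies in $\overline{A}$. I would verify it is a ring homomorphism by noting that composition of endomorphisms corresponds, under the identification of part (1) with multiplication operators, to multiplication in $\Quot(A)$. To identify the image with $\tfrac{1}{g}(gJ:_A J)$, I would argue as follows: if $\varphi \in \Hom_A(J,J)$ and $a := \varphi(g) \in J \subseteq A$, then the relation $j\varphi(g) = g\varphi(j)$ gives $aJ \subseteq gJ$, so $a \in (gJ:_A J)$. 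Conversely, given $a \in (gJ:_A J)$, for each $j \in J$ there exists a unique element $\psi(j) \in J$ with $g\psi(j) = aj$ (uniqueness because $g$ is a non-zerodivisor); uniqueness then forces $\psi$ to be $A$-linear, and by construction $\psi(g)/g = a/g$. This gives the bijection and thus the claimed isomorphism of rings.

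The argument is largely formal manipulation; the only step that is not immediate is the integrality assertion, where one must remember that the determinant trick requires both finite generation of $J$ as an $A$-module and faithfulness of $J$ over $A[\varphi(g)/g]$. Both hinge on the existence of the non-zerodivisor $g \in J$, so I would be careful to flag this hypothesis at each use. No other step poses a real difficulty.
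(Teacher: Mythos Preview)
The paper does not actually supply a proof of this lemma: at the beginning of Section~\ref{sect:glob-norm-alg} it explicitly refers to \cite{GLS} and \cite{GP} for details and proofs, and Lemma~\ref{lemma:prep-GR} is stated without argument. Your proposal is correct and is precisely the standard proof one finds in those references: the bilinearity identity $g\varphi(j)=\varphi(gj)=j\varphi(g)$ yields both well-definedness and the multiplication description, and the determinant trick (Cayley--Hamilton) on the finitely generated faithful module $J$ gives integrality of $\varphi(g)/g$. The identification of the image with $\tfrac{1}{g}(gJ:_A J)$ is handled exactly as in the literature. There is nothing to correct.
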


\begin{prop}
[\cite{GR}]\label{prop:testideal} \label{prop:crit-GR} Let $A$ be a reduced
Noe\-therian ring, and let $J \subseteq A$ be an ideal satisfying the
following conditions:

\begin{enumerate}
\item $J$ contains a non--zerodivisor $g$ of $A$,

\item $J$ is a radical ideal,
%\item $N(A) \subseteq V(J)$.

\item $V(\mathcal{C}_{A}) \subseteq V(J)$.
\end{enumerate}

\noindent Then $A$ is normal iff $A\cong\Hom_{A}(J,J)$ via the map which sends
$a$ to $\varphi_{a}$.
\end{prop}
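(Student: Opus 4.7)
My plan is to prove the two implications separately. The forward direction is immediate from Lemma~\ref{lemma:prep-GR}(2): if $A$ is normal, then $A = \overline{A}$, so the chain $A \subseteq \Hom_A(J,J) \subseteq \overline{A}$ collapses to equalities, and the isomorphism $A \cong \Hom_A(J,J)$ is realized by $a \mapsto \varphi_a$.

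For the converse, I would argue by contradiction: assume $A \cong \Hom_A(J,J)$ via $a \mapsto \varphi_a$ but $A \neq \overline{A}$, and produce an element of $\Hom_A(J,J) \setminus A$. The first step is to translate condition~(3) into usable ring--theoretic data: since $J$ is radical and $V(\mathcal{C}_A) \subseteq V(J)$, we have $J \subseteq \sqrt{\mathcal{C}_A}$, so some power $J^n$ lies in $\mathcal{C}_A$, and in particular $J^n \overline{A} \subseteq A$. (Note that because $g \in J$ is a non--zerodivisor and $g^n \in \mathcal{C}_A$, Lemma~\ref{lemma:role-of-cond} also ensures $\overline{A}$ is finite over $A$.) Choose the smallest $k \geq 1$ with $J^k \overline{A} \subseteq A$; since $\overline{A} \not\subseteq A$, minimality of $k$ forces $J^{k-1}\overline{A} \not\subseteq A$, so we can pick $b \in J^{k-1}\overline{A} \setminus A$. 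This $b$ lies in $\overline{A} \setminus A$ and satisfies $bJ \subseteq J^k \overline{A} \subseteq A$.

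The main obstacle is the last step: upgrading $bJ \subseteq A$ to $bJ \subseteq J$, so that $b$ actually defines an element of $\Hom_A(J,J)$. Here I would combine the radicality of $J$ with the integrality of $b$ over $A$. Fix a monic integral relation $b^N + a_{N-1} b^{N-1} + \cdots + a_0 = 0$ with $a_i \in A$, and let $j \in J$. Multiplying the relation by $j^N$ yields
\[
(bj)^N \;=\; -\sum_{i=0}^{N-1} a_i\, j^{N-i}\, (bj)^i.
\]
On the right, each $(bj)^i$ lies in $A$ because $bj \in A$, while $j^{N-i} \in J$ since $N - i \geq 1$; hence every summand is in $J$, and so is $(bj)^N$. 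Because $J = \sqrt{J}$, it follows that $bj \in J$. Thus $bJ \subseteq J$, so $b \in \Hom_A(J,J) = A$, contradicting $b \notin A$.
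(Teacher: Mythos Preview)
Your argument is correct. The forward direction is immediate from Lemma~\ref{lemma:prep-GR}(2) as you say, and for the converse your minimality trick with $J^{k}\overline{A}\subseteq A$ followed by the integrality computation $(bj)^{N}\in J$ and the use of $J=\sqrt{J}$ is exactly the standard route; every step is justified in the Noetherian setting (finite generation of $J$ gives $J^{n}\subseteq\mathcal{C}_{A}$ from $J\subseteq\sqrt{\mathcal{C}_{A}}$, and $g^{n}\in\mathcal{C}_{A}$ being a non--zerodivisor ensures $\overline{A}$ is module--finite so that the integral equation for $b$ exists).

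As for comparison: the paper does not supply its own proof of this proposition. It attributes the result to \cite{GR} and, at the start of Section~\ref{sect:glob-norm-alg}, explicitly refers the reader to \cite{GLS} and \cite{GP} for details and proofs of the background material. Your proof is essentially the classical one found in those references (see, e.g., \cite[Lemma~3.6.1 and Proposition~3.6.5]{GP}), so there is no substantive divergence to discuss.
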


\begin{defn}
\label{def:test-pair} A pair $(J,g)$ as in Proposition \ref{prop:testideal} is
called a {\emph{test pair}} for $A$, and $J$ is called a {\emph{test ideal}}
for $A$.
\end{defn}

%\noindent
By Lemma \ref{lemma:role-of-cond}, test pairs exist iff $\overline{A}$ is
module--finite over $A$. Given such a pair $(J,g)$, the idea of finding
$\overline{A}$ is to successively enlarge $A$ until the normality criterion
allows us to stop (since $A$ is Noetherian, this will eventually happen in the
module--finite case). Starting from $A_{0}=A$, we get a chain of extensions of
reduced Noetherian rings
\[
A = A_{0} \subseteq\dots\subseteq A_{i-1} \subseteq A_{i} \subseteq
\dots\subseteq A_{m} = \overline{A}.
\]
Here, $A_{i+1} = \Hom_{A_{i}}(J_{i},J_{i}) \cong\frac1g (gJ_{i} :_{A_{i}}
J_{i})$, where $J_{i}$ is the radical of the extended ideal $J A_{i}$, for
$i\geq1$.
%(recursively,
%$J_{0}=J$ and $J_{i} = \sqrt{J_{i-1} A_{i}}$), for $i\geq1$.
Note that ($J_{i},g)$ is indeed a test pair for $A_{i}$:
%(as shown in
%\cite{GLS10}, ($J_{i},p)$ is a test pair for $A_{i}$).

\begin{rem}[\cite{GLS}, Prop.\ 3.2]\label{rem:ext-test-ideal} Let $A$ be a reduced
Noetherian ring such that $\overline{A}$ is module--finite over $A$, and let
$A\subseteq A^{\prime}\subseteq\overline{A}$ be an intermediate ring. Clearly,
every non--zerodivisor $g\in A$ of $A$ is a non--zerodivisor of $\Quot(A)$. In
particular, it is a non--zerodivisor of $A^{\prime}$. Furthermore, if
$\mathcal{C}_{A^{\prime}}$ is the conductor of $A^{\prime}$ in $\overline
{A^{\prime}}=\overline{A}$, then $\mathcal{C}_{A^{\prime}}\supseteq
\mathcal{C}_{A}$. It follows that every prime ideal $Q\in N(A^{\prime
})=V(\mathcal{C}_{A^{\prime}})$ contracts to a prime ideal $P=Q\cap A\in
N(A)=V(\mathcal{C}_{A})$. Hence, if $(J,g)$ is a test pair for $A$, then
$P\supseteq J$, which implies that $Q\supseteq\sqrt{JA^{\prime}}=:J^{\prime}$.
We conclude that $(J^{\prime},g)$ is a test pair for $A^{\prime}$.
\end{rem}

%\noindent
Explicit computations rely on explicit representations of the $A_{i}$ as
$A$--algebras. These will be obtained as an application of Lemma
\ref{lemma:k-algebra-structure} below. To formulate the lemma, we use the
following notation. Let $J\subseteq A$ be an ideal containing a
non--zerodivisor $g$ of $A$, and let
%We describe such a representation for $A_{1}$
%(recursion will handle the other $A_{i}$).
$A$--module generators $u_{0} = g, u_{1}, \dots, u_{s}\;\!$ for $gJ :_{A} J$
be given. Choose variables $T_{1}, \dots, T_{s}$, and consider the
epimorphism
%of $A$--algebras%
\[
\Phi: A[T_{1}, \dots, T_{s}] \rightarrow\frac{1}{g} \, (gJ :_{A}J), \; T_{i}
\mapsto\frac{u_{i}}{g}.
\]
The kernel of $\Phi$ describes the $A$--algebra relations on the $u_{i}/g$. We
single out two types of relations:

\begin{itemize}
\item Each $A$--module syzygy
\[
\alpha_{0}u_{0} + \alpha_{1}u_{1} + \ldots+ \alpha_{s} u_{s} = 0, \quad
\alpha_{i} \in A,
\]
gives an element $\alpha_{0} +\alpha_{1}T_{1} + \ldots+ \alpha_{s}T_{s}%
\in\Ker \Phi$, which we call a \textit{linear relation}.

\item Developing each product $\frac{u_{i\vphantom{j}}}{g}\frac{u_{j}}{g}$, $1
\leq i \leq j \leq s$, as a sum $\frac{u_{i\vphantom{j}}}{g}\frac{u_{j}}{g} =
\sum_{k} \beta_{ijk} \frac{u_{k}}{g}$, we get elements $T_{i}T_{j} - \sum_{k}
\beta_{ijk} T_{k}$ in $\Ker \Phi$, which we call \textit{quadratic relations}.
\end{itemize}

\noindent It is easy to see that these linear and quadratic relations already
generate $\Ker\Phi$. We thus have:

\begin{lem}
\label{lemma:k-algebra-structure} Let $A$ be a reduced Noetherian ring, and
let $J\subseteq A$ be an ideal containing a non--zerodivisor $g$ of $A$. Then,
given $A$--module generators $u_{0} = g, u_{1}, \dots, u_{s}\;\!$ for $gJ
:_{A} J$, we have an isomorphism of $A$--algebras
\[
A[T_{1},\dots,T_{s}]/R\cong\frac{1}{g} \, (gJ :_{A}J), \; T_{i} \mapsto
\frac{u_{i}}{g},
\]
where $R$ is the ideal generated by the linear and quadratic relations
described above.
%where $X$ consists of the  linear and quadratic relations on the $u_i$
%described in \cite[Lemma 3.6.7]{GP}.

\end{lem}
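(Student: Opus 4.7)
The plan is to verify that the map $\Phi$ is surjective (which is immediate from the hypothesis that $u_0=g,u_1,\dots,u_s$ generate $gJ:_A J$ as an $A$--module) and to identify $\ker\Phi$ with $R$.

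The inclusion $R\subseteq\ker\Phi$ is tautological: any linear relation $\alpha_0+\sum_i\alpha_iT_i$ comes from a syzygy $\alpha_0g+\sum_i\alpha_iu_i=0$, so $\Phi(\alpha_0+\sum_i\alpha_iT_i)=\alpha_0+\sum_i\alpha_i u_i/g=0$; and each quadratic relation $T_iT_j-\sum_k\beta_{ijk}T_k$ is, by construction, the exact relation obtained from expressing $(u_i/g)(u_j/g)$ in terms of the $u_k/g$.

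For the reverse inclusion $\ker\Phi\subseteq R$, the key idea is a normal--form reduction. Given any $F\in A[T_1,\dots,T_s]$, I would show by induction on total degree that $F$ is congruent modulo $R$ to an $A$--linear polynomial $\alpha_0+\alpha_1T_1+\cdots+\alpha_sT_s$. Indeed, any monomial $T_{i_1}\cdots T_{i_d}$ of degree $d\geq 2$ can be rewritten, using a single quadratic relation to replace some $T_iT_j$ by $\sum_k\beta_{ijk}T_k$, as a polynomial of strictly smaller degree; iterating, the reduction terminates in a linear polynomial. Now suppose the reduced form $\alpha_0+\sum_i\alpha_iT_i$ lies in $\ker\Phi$. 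Then
\[
\alpha_0+\sum_{i=1}^s\alpha_i\,\frac{u_i}{g}=0\quad\text{in }\Quot(A),
\]
and multiplying through by $g=u_0$ yields the $A$--module syzygy $\alpha_0u_0+\alpha_1u_1+\cdots+\alpha_su_s=0$. This is precisely a linear relation, so the reduced form already belongs to $R$, and hence so does $F$.

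The main obstacle is carefully justifying the reduction step: one must be sure that the quadratic relations suffice to rewrite an arbitrary monomial as a linear combination of $T_1,\dots,T_s$ modulo $R$. This is really a structural remark about the target ring. Since $\frac{1}{g}(gJ:_A J)$ is an $A$--submodule of $\Quot(A)$ generated by $1,u_1/g,\dots,u_s/g$, every product $(u_i/g)(u_j/g)\in\frac{1}{g}(gJ:_A J)$ can be written as such an $A$--linear combination, which supplies the $\beta_{ijk}$ and hence the quadratic relations used in the reduction. Once this is in place, the argument above closes.
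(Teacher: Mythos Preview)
Your argument is correct and is exactly the standard reduction the paper has in mind: the paper does not write out a proof at all but simply asserts that ``it is easy to see that these linear and quadratic relations already generate $\Ker\Phi$,'' and your degree--reduction via the quadratic relations followed by the observation that a linear element of $\ker\Phi$ is a syzygy is precisely how one makes this precise.
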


%\noindent
The following result from \cite{GLS} will allow us to find the normalization
in a way such that all calculations except the computation of the radicals
$\sqrt{J_{i}}$ can be carried through in the original ring $A$:
%(as a matter of fact, in the
%polynomial ring $K[x_{1},\dots,x_{n}]$):

\begin{thm}
\label{thm:comp-in-orig-ring} Let $A$ be a reduced Noetherian ring, let
$J\subseteq A$ be an ideal containing a non--zerodivisor $g$ of $A$, let
$A\subseteq A^{\prime}\subseteq\Quot(A)$ be an intermediate ring such that
$A^{\prime}$ is module--finite over $A$, and let $J^{\prime}=\sqrt{JA^{\prime
}}$.
%Let $A = K[x_{1},\dots,x_{n}]/I$ be a reduced affine $K$--algebra, and let
%$(J,p)$ be a test pair for $A$. Consider the inclusions $A\subseteq A_1%
%\subseteq\Quot(A)$ from Lemma \ref{lemma:prep-GR}, and let $(J_{1},p)$ be a
%test pair for $A_{1}$.
Let $U$ and $H$ be ideals of $A$ and $d\in A$ such that $A^{\prime}=\frac
{1}{d}U$ and $J^{\prime}=\frac{1}{d}H$, respectively. Then
\[
(gJ^{\prime}:_{A^{\prime}}J^{\prime})=\frac{1}{d}\,(dgH:_{A}H)\subseteq
\Quot(A).
\]

\end{thm}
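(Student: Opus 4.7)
The plan is to translate the colon condition $x\in(gJ':_{A'}J')$, which lives in $A'\subseteq\Quot(A)$, into a colon condition on the ``numerator'' $dx\in A$, by clearing the denominator $d$. Both sides of the claimed equality are naturally viewed as subsets of $\Quot(A)$, so the proof reduces to two elementwise containments.

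Before doing the containment arguments, I would record the technical preliminaries. For $\frac{1}{d}U$ and $\frac{1}{d}H$ to make sense as submodules of $\Quot(A)$, the element $d$ must be a non--zerodivisor of $A$; thus multiplication by $d$ is injective on every $A$--submodule of $\Quot(A)$, and it induces $A$--module isomorphisms $A'\cong U$ and $J'\cong H$ (via $x\mapsto dx$, with inverse $u\mapsto u/d$). Moreover $g\in J\subseteq J'\cap A$ and $g$ is a non--zerodivisor of $A$, hence of $\Quot(A)$ and of $A'$. Consequently, for $\alpha,\beta\in\Quot(A)$ one may freely multiply or divide containments of the form $\alpha J'\subseteq\beta J'$ by powers of $d$, turning them into equivalent statements about $\alpha H$ and $\beta H$.

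For the forward inclusion, take $x\in(gJ':_{A'}J')$ and write $x=u/d$ with $u\in U\subseteq A$. Multiplying $xJ'\subseteq gJ'$ by $d^{2}$ gives $u\cdot(dJ')\subseteq dg\cdot(dJ')$, i.e.\ $uH\subseteq dgH$. Hence $u\in(dgH:_{A}H)$, so $x=u/d\in\frac{1}{d}(dgH:_{A}H)$. For the reverse inclusion, take $y\in(dgH:_{A}H)$ and set $x=y/d\in\Quot(A)$. Dividing $yH\subseteq dgH$ by $d^{2}$ yields $xJ'\subseteq gJ'$, so the only remaining point is to check $x\in A'$; once this is established, $x$ lies in $(gJ':_{A'}J')$ as required.

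The membership $x\in A'$ is the one step that requires a separate argument and will be the main (though minor) obstacle. To handle it, I would exploit that $g\in J'\cap A$: the containment $xJ'\subseteq gJ'\subseteq gA'$ applied to $g\in J'$ gives $xg=gx'$ for some $x'\in A'$, and cancelling the non--zerodivisor $g$ in $\Quot(A)$ yields $x=x'\in A'$. Everything else is routine manipulation of fractions, so the proof then follows by combining the two inclusions.
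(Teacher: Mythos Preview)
Your argument is correct. The two inclusions are handled cleanly; the only nontrivial point, showing $x=y/d\in A'$ in the reverse direction, is dispatched exactly as it should be: since $g\in J\subseteq JA'\subseteq\sqrt{JA'}=J'$, the containment $xJ'\subseteq gJ'$ evaluated at $g$ gives $xg\in gJ'\subseteq gA'$, and cancelling the non--zerodivisor $g$ forces $x\in A'$.

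Regarding comparison with the paper: the paper does not actually prove this theorem. It quotes the result from \cite{GLS} (introduced as ``The following result from \cite{GLS}\dots'') and gives no argument of its own. So there is nothing to compare your approach against within this paper; your proof simply supplies what the paper omits. If one consults \cite{GLS}, the proof there is essentially the same clearing--of--denominators computation you carry out.
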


%\noindent

\begin{rem}
In the case where $A=K[X_{1}\dots,X_{n}]/I$ is a reduced affine algebra over a
field $K$, let $P_{1},\dots,P_{r}$ be the associated primes of the radical
ideal $I$. Then
\[
\overline{A}\cong\overline{K[X_{1},\dots,X_{n}]/P_{1}}\times\cdots
\times\overline{K[X_{1},\dots,X_{n}]/P_{r}},
\]
and $\overline{A}$ is module--finite over $A$ by Emmy Noether's finiteness
theorem (see \linebreak \cite{SH}). Thus, using techniques for primary decomposition as
in \cite[Remark 4.6]{GLS}, the computation of normalization can be reduced to
the case where $A$ is an affine domain (that is, $I$ is a prime ideal). When
writing our algorithms in pseudocode, we will always start from a domain $A$.
Talking about a non--zerodivisor then just means to talk about a non--zero element.
\end{rem}

\begin{rem}
\label{remJacobian} If $A$ is an affine domain over a perfect field $K$, we
can apply the Jacobian criterion (see \cite{Eis}): If $M$ is the Jacobian
ideal\footnote{The {\emph{Jacobian ideal}} of $A$ is generated by the images
of the $c\times c$ minors of the Jacobian matrix $\big(\frac{\partial f_{i}%
}{\partial x_{j}}\big)$, where $c$ is the codimension, and $f_{1},\dots,f_{r}$
are polynomial generators for $I$.} of $A$, then $M$ is non--zero and
contained in the conductor $\mathcal{C}_{A}$ (see \cite[Lemma 4.1]{GLS}).
Hence, we may choose $\sqrt{M}$ together with any non--zero element $g$ of
$\sqrt{M}$ as an initial test pair. Implementing all this, the GLS
normalization algorithm will find an ideal $U\subseteq A$ and a denominator
$d\in\mathcal{C}_{A}$ such that
\[
\overline{A}=\frac{1}{d}\,U\subseteq\Quot(A).
\]
Since $M$ is contained in $\mathcal{C}_{A}$, any non--zero element of $M$ is
valid as a denominator: If $0\neq c\in M$, then $c\cdot\frac{1}{d}U=:U'$ is an
ideal of $A$, so that $\frac{1}{d}U=\frac{1}{c}U'$.
\end{rem}

For the purpose of comparison with the local approach of the next section, we
illustrate the GLS algorithm by an example:

\begin{exmp}
\label{ex a3 E6} For
\[
A=K[x,y]=K[X,Y]/\langle X^{4}+Y^{2}(Y-1)^{3}\rangle,
\]
the radical of the Jacobian ideal is
\[
J:=\left\langle x,y\left(  y-1\right)  \right\rangle _{A},
\]
and we can take $g:=x\in J$ as a non--zerodivisor of $A$. In its first step,
starting with the initial test pair $(J,x)$, the normalization algorithm
produces the following data:
\[
U^{(1)}:=xJ\;:_{A}\;J=\left\langle x,y(y-1)^{2}\right\rangle _{A}%
\;\,\text{and}\;\,A_{1}:=A[t_{1}]:=A[T_{1}]/I_{1}\cong\frac{1}{x}U^{(1)},
\]
with relations and isomorphism given by
\[
I_{1}=\left\langle -T_{1}x+y(y-1)^{2},T_{1}y(y-1)+x^{3},T_{1}^{2}%
+x^{2}(y-1)\right\rangle _{A[T_{1}]}%
\]
and
\[
t_{1}\mapsto\frac{y(y-1)^{2}}{x},
\]
respectively. In the next step we find
\begin{align*}
J_{1}:=  &  \;\sqrt{\left\langle x,y(y-1)\right\rangle _{A_{1}}}=\left\langle
x,y(y-1),t_{1}\right\rangle _{A_{1}}\\
=  &  \;\frac{1}{x}\left\langle x^{2},xy(y-1),y(y-1)^{2}\right\rangle
_{A}=:\frac{1}{x}H_{1}.
\end{align*}
Using the test pair $(J_{1},x)$ and applying Theorem
\ref{thm:comp-in-orig-ring} and Lemma \ref{lemma:k-algebra-structure}, we get%
\begin{align*}
\frac{1}{x}(xJ_{1}\;:_{A_{1}}\;J_{1})  &  =\frac{1}{x^{2}}(x^{2}H_{1}%
\;:_{A}\;H_{1})\\
&  =\frac{1}{x^{2}}\left\langle x^{2},xy(y-1),y(y-1)^{2}\right\rangle
_{A}=:\frac{1}{x^{2}}U^{(2)}%
\end{align*}
and%
\[
A_{2}:=A[t_{2},t_{3}]:=A[T_{2},T_{3}]/I_{2}\cong\frac{1}{x^{2}}U^{(2)},
\]
with relations and isomorphism given by%
\begin{align*}
I_{2}  &  =\left\langle T_{2}x-T_{3}(y-1),-T_{3}x+y(y-1),T_{2}y(y-1)+x^{2}%
,T_{2}y^{2}(y-1)^{2}+T_{3}x^{3},\right. \\
&  \qquad\left.  T_{2}^{2}+(y-1),T_{2}T_{3}+x,T_{3}^{2}-T_{2}y\right\rangle
\end{align*}
and
\[
t_{2}\mapsto\frac{y(y-1)^{2}}{x^{2}},\;t_{3}\mapsto\frac{y(y-1)}{x},\;
\]
respectively. In the final step, we find that $A_{2}$ is normal, so that
$\overline{A} = A_{2}$.
%and, hence, is
%equal to $\overline{A}$. From $U^{(2)}$ we obtain the integral basis
%\[
%1,y,\frac{y\left(  y-1\right)  }{x},\frac{y\left(  y-1\right)  ^{2}}{x^{2}%
%},\frac{y^{2}\left(  y-1\right)  ^{2}}{x^{2}}.
%\]
\end{exmp}

\section{Normalization via Localization} \label{sect:norm-via-loc}

In this section, we discuss a local--to--global approach for computing 
normalization. Our starting point is the following result:

\begin{prop}
\label{prop:local-to-global-I} Let $A$ be a reduced Noetherian ring. Suppose
that the singular locus $\Sing(A)=\{P_{1},\dots,P_{s}\}$ is finite. For
$i=1,\dots,s$, let $S_{i}=A\setminus P_{i}$, and let an intermediate
ring $A\subset A^{(i)}\subset\overline{A}$ be given such that $S_{i}%
^{-1}A^{(i)}=\overline{S_{i}^{-1}A}$. Then
\[
\sum_{i=1}^{s}A^{(i)}=\overline{A}.
\]
%is the normalization of $A$.

\end{prop}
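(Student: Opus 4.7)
The plan is to prove the nontrivial inclusion $\overline{A} \subseteq B$, where $B := \sum_{i=1}^{s} A^{(i)}$; the reverse inclusion $B \subseteq \overline{A}$ is immediate since each $A^{(i)} \subseteq \overline{A}$. For a fixed element $x \in \overline{A}$, I would introduce the colon ideal
\[
I := (B :_{A} x) = \{\, a \in A \mid ax \in B \,\}
\]
and try to show $I = A$, which forces $x = 1 \cdot x \in B$. Since every proper ideal of $A$ sits inside some prime ideal, it suffices to verify that $I \not\subseteq P$ for each $P \in \Spec(A)$.

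The essential input is the standard fact that integral closure commutes with localization, that is, $S^{-1}\overline{A} = \overline{S^{-1}A}$ for every multiplicatively closed subset $S \subseteq A$. I would then split into two cases, according to whether $P$ lies in the singular locus. If $P \notin \Sing(A)$, then $A_{P}$ is a regular local ring and therefore normal, so $(\overline{A})_{P} = \overline{A_{P}} = A_{P}$. Hence $x = a/s$ for some $a \in A$ and $s \in A \setminus P$, giving $sx = a \in A \subseteq B$ with $s \notin P$, so $I \not\subseteq P$. If instead $P \in \Sing(A)$, write $P = P_{i}$, so that $S_{i} = A \setminus P$. The hypothesis then gives $S_{i}^{-1} A^{(i)} = \overline{S_{i}^{-1} A} = S_{i}^{-1} \overline{A}$, which contains $x$, so we can write $x = y/s$ with $y \in A^{(i)}$ and $s \in S_{i}$. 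This yields $sx = y \in A^{(i)} \subseteq B$ with $s \notin P$, again giving $I \not\subseteq P$.

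Combining the two cases shows that $I$ is contained in no prime ideal of $A$, so $I = A$ and hence $x \in B$, completing the argument. The proof is quite soft, and I do not anticipate any genuine obstacle; the one point requiring care is the invocation of the general principle that integral closure commutes with localization, which is what converts the pointwise hypothesis $S_{i}^{-1} A^{(i)} = \overline{S_{i}^{-1} A}$ into the globally useful form $S_{i}^{-1} A^{(i)} = S_{i}^{-1} \overline{A}$. Note also that the proof makes no module-finiteness assumption on $\overline{A}$; such finiteness is only implicitly needed to guarantee existence of the intermediate rings $A^{(i)}$ in practical settings.
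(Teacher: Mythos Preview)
Your argument is correct and is essentially the same as the paper's, which defers to the more general Proposition~\ref{prop:local-to-global-II}: there one shows $S^{-1}B = S^{-1}\overline{A}$ for every prime $P$ (with $S = A\setminus P$), splitting into the same two cases $P\in\Sing(A)$ and $P\notin\Sing(A)$ and invoking the same two facts (regular local rings are normal, and integral closure commutes with localization). Your colon--ideal formulation $I = (B:_A x)$ is just the element--by--element rephrasing of the module--theoretic principle that an inclusion of $A$--modules is an equality once it is so after localizing at every prime.
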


\begin{proof}
We will show a more general result in Proposition
\ref{prop:local-to-global-II} below.
\end{proof}

That $\Sing(A)$ is finite is, for example, true if $A$ is the coordinate ring
of a curve. Whenever $\Sing(A)=\{P_{1},\dots,P_{s}\}$ is finite, the
proposition allows us to find $\overline{A}$ by normalizing locally at each
$P_{i}$ using Proposition \ref{prop local grauert-remmert} below, and putting
the local results together. In the case where $\Sing(A)$ is not finite,
working just with the (finitely many)  minimal primes in $\Sing(A)$ will
not give the correct result. However, it is still possible to obtain
$\overline{A}$ as a finite sum of local contributions: The idea is to stratify
$\Sing(A)$ in a way which is compatible with normalization. For this, if
$P\in\Sing(A)$, set
\[
L_{P}=\bigcap_{P\supseteq\widetilde{P}\in\Sing(A)}\widetilde{P}.
\]
We stratify $\Sing(A)$ according to the values of the function $P\mapsto
L_{P}$. That is, if%
\[
\mathcal{L}=\{L_{P}\mid P\in\Sing(A)\}
\]
denotes the set of all possible values, then the strata are the
sets%
\[
V_{L}=\{P\in\Sing(A)\mid L_{P}=L\}\text{, }L\in\mathcal{L}\text{.}%
\]
We write $\operatorname{Strata}(A)=\left\{  V_{L}\mid L\in\mathcal{L}\right\}
$ for the set of all  strata. If $P_{1},...,P_{r}$ denote the minimal
primes in $\Sing(A)$, we have
\[
\mathcal{L}\subseteq\left\{  {%
%TCIMACRO{\tbigcap \nolimits_{i\in\Gamma}}%
%BeginExpansion
{\textstyle\bigcap\nolimits_{i\in\Gamma}}
%EndExpansion
}P_{i}\mid\Gamma\subseteq\{1,...,r\}\right\}.
\]
Hence, the set of strata is finite. By construction, the singular locus is
the disjoint union of all strata. For $V\in\operatorname{Strata}(A)$, write
$L_{V}$ for the constant value of $P\mapsto L_{P}$ on $V$.

We can now state and prove a result which is more general than Proposition
\ref{prop:local-to-global-I}:

\begin{prop}
\label{prop:local-to-global-II} Let $A$ be a reduced Noetherian ring with
stratification of the singular locus $\operatorname{Strata}(A) =\{V_{1}%
,...,V_{s}\}$. For $i=1,\dots,s$, let an intermediate ring
$A\subseteq A^{(i)}\subseteq\overline{A}$ be given such that $S^{-1}%
A^{(i)}=\overline{S^{-1}A}$ for each $S=A\setminus P$, $P\in V_{i}$. Then
\[
\sum_{i=1}^{s}A^{(i)}=\overline{A}\text{.}%
\]

\end{prop}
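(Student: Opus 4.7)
The plan is to verify the equality prime-by-prime. Setting $B=\sum_{i=1}^{s}A^{(i)}$, the containments $A\subseteq B\subseteq \overline{A}$ are clear, so the task reduces to showing $B_{P}=\overline{A}_{P}$ for every $P\in\Spec(A)$, where the subscript denotes localization at $S=A\setminus P$. Equality on every stalk will force $B=\overline{A}$ as $A$--submodules of $\Quot(A)$.

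The preliminary step I would record is the standard fact that integral closure commutes with localization at multiplicative subsets, so that $S^{-1}\overline{A}=\overline{S^{-1}A}=\overline{A_{P}}$, where the latter means the integral closure of $A_{P}$ inside $\Quot(A_{P})$. To make sense of the inclusion $\overline{A}_{P}\subseteq\Quot(A_{P})$, I would also note the identification $S^{-1}\Quot(A)\cong\Quot(A_{P})$, which in the reduced Noetherian setting follows by decomposing both sides as $\prod_{P_{j}\subseteq P}\Quot(A/P_{j})$ over the minimal primes $P_{j}$ of $A$ contained in $P$.

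With this in hand the argument splits into two cases. If $P\notin\Sing(A)$, then $A_{P}$ is regular, hence normal, so the preliminary step gives $\overline{A}_{P}=\overline{A_{P}}=A_{P}$, and the sandwich $A_{P}\subseteq B_{P}\subseteq\overline{A}_{P}$ collapses to equality. If instead $P\in\Sing(A)$, then by construction $P$ lies in some stratum $V_{i}$, and the hypothesis gives $(A^{(i)})_{P}=\overline{S^{-1}A}=\overline{A}_{P}$; since $A^{(i)}$ is one of the summands of $B$, the sandwich $(A^{(i)})_{P}\subseteq B_{P}\subseteq\overline{A}_{P}$ again collapses. Combining both cases yields $B_{P}=\overline{A}_{P}$ at every prime, whence $B=\overline{A}$.

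The only real obstacle is the localization/integral-closure interchange in the preliminary step, which is essentially routine but must be set up correctly so that the two sides literally live in the same ambient ring $\Quot(A_{P})$; once that is settled, the case distinction is formal. I would also remark that the specific shape of the strata $V_{i}$ plays no role in the proof beyond the fact that every singular prime belongs to some stratum: the delicate partition defined by the function $P\mapsto L_{P}$ is only needed upstream, to allow the intermediate rings $A^{(i)}$ to be produced by a single computation that is uniformly valid along the stratum.
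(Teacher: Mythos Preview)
Your proof is correct and follows essentially the same route as the paper: set $B=\sum_i A^{(i)}$, use the inclusions $A\subseteq B\subseteq\overline{A}$, and verify $B_P=\overline{A}_P$ at every prime by splitting into the singular and regular cases via the sandwich argument. The paper's version is terser and does not dwell on the $S^{-1}\overline{A}=\overline{S^{-1}A}$ identification, but the logic is identical.
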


\begin{proof}
By construction, $B:=\sum_{i=1}^{s}A^{(i)}\subseteq\overline{A}$. We wish to
show equality. It suffices to show that if $P\in\Spec(A)$ is a prime ideal and
$S=A\setminus P$, then $S^{-1}B=S^{-1}\overline{A}$. If $P\in\Sing(A)$, then
$P\in V_{i}$ for some $i$. Hence, $S^{-1}A^{(i)}=\overline{S^{-1}A}$,
and the local equality is obtained from the chain of inclusions
\[
S^{-1}A^{(i)}\subseteq S^{-1}B\subseteq S^{-1}\overline{A}=\overline{S^{-1}%
A}\text{.}%
\]
If $P\not \in\Sing(A)$, then $S^{-1}A$ is normal, and the local equality follows 
likewise from the chain of inclusions
\[
S^{-1}A\subseteq S^{-1}B\subseteq S^{-1}\overline{A}=\overline{S^{-1}%
A}\text{.}%
\]
\end{proof}

%In this section, given an affine domain $A$ over a perfect field $K$ and a
%prime ideal $P\in\Sing(A)$ which is minimal with respect to inclusion, we show
%how to find an intermediate ring $A\subseteq A^{\prime}\subseteq\overline{A}$ such
%that the localization $S^{-1} A^{\prime}$ of $A^{\prime}$ at $S:=A\setminus P$
%is the normalization of $S^{-1} A$. Then, assuming that all prime ideals in
%$\Sing(A)$ are minimal (this is obviously fulfilled if $A$ is the coordinate
%ring of a curve as in the introduction), we show how to find the normalization
%of $A$ by putting the local results together.
%\noindent
For a given stratum $V=V_i$, the modification of the Grauert and Remmert criterion
below will allow us to find a ring $A^{(i)}$ as above along the lines of the previous section:

\begin{prop}
\label{prop local grauert-remmert} Let $A$ be a reduced Noetherian ring such
that $\overline{A}$ is module--finite over $A$, and let $A\subseteq A^{\prime
}\subseteq\overline{A}$ be an intermediate ring. Let $V\in
\operatorname{Strata}(A)$, and let $J^{\prime}=\sqrt{L_{V}A^{\prime}}$. Suppose
that $L_{V}$ contains a non--zerodivisor $g$ of $A$. If
\[
A^{\prime}\cong\operatorname*{Hom}\nolimits_{A^{\prime}}(J^{\prime},J^{\prime
})
\]
via the map which sends $a^{\prime}$ to $\varphi_{a^{\prime}}$, then the
localization $S^{-1}A^{\prime}$ with $S=A\setminus P$ is normal for each $P\in
V$.
\end{prop}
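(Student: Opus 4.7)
The plan is to verify the Grauert--Remmert normality criterion (Proposition~\ref{prop:crit-GR}) for the ring $S^{-1}A'$, using $S^{-1}J'$ as the test ideal and $g$ as the non--zerodivisor. We observe first that, since $A\subseteq A'\subseteq\overline{A}$, we have $\overline{A'}=\overline{A}$, which is module--finite over $A'$; in particular, $A'$ is Noetherian and Lemma~\ref{lemma:role-of-cond} applies to it.

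The first task will be to check the three test--pair conditions for $(S^{-1}J',g)$ in $S^{-1}A'$. Conditions (1) and (2) are essentially formal: $g$ remains a non--zerodivisor in $\Quot(A)=\Quot(A')$, it lies in $L_V\subseteq J'$, and localization preserves radicality. The heart of the argument, and what we expect to be the main obstacle, is condition (3): $V(\mathcal{C}_{S^{-1}A'})\subseteq V(S^{-1}J')$. To establish it, we would take a prime $Q$ of $A'$ disjoint from $S$ at which $A'_Q$ is non--normal, and show that its contraction $P':=Q\cap A$ lies in $\Sing(A)$. If $A_{P'}$ were regular and hence normal, then $A_{P'}=\overline{A_{P'}}$; since $A'$ is integral over $A$, this would give $(A\setminus P')^{-1}A'\subseteq\overline{A_{P'}}=A_{P'}$, making $A'_Q$ a localization of a normal ring and hence normal --- a contradiction. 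Once $P'\in\Sing(A)$ is established, the defining property of the stratum takes over: since $P'\subseteq P$ and $P\in V$, we have $L_V=L_P\subseteq P'$, whence $Q\supseteq L_V A'$, and primality of $Q$ yields $Q\supseteq J'$.

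It remains to transfer the isomorphism $A'\cong\Hom_{A'}(J',J')$ to the localization. Because $A'$ is Noetherian and $J'$ is finitely generated --- hence finitely presented --- the functor $\Hom_{A'}(J',-)$ commutes with the flat base change $S^{-1}$, yielding $S^{-1}A'\cong\Hom_{S^{-1}A'}(S^{-1}J',S^{-1}J')$ via $\bar a'\mapsto\varphi_{\bar a'}$. With the test--pair conditions and this Hom isomorphism both in hand, Proposition~\ref{prop:crit-GR} applied to $S^{-1}A'$ concludes that $S^{-1}A'$ is normal.
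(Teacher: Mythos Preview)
Your proof is correct and follows the same strategy as the paper: localize the Hom isomorphism via flat base change, then verify the three Grauert--Remmert conditions for $S^{-1}J'$ in $S^{-1}A'$, the crux being that the contraction $P'=Q\cap A$ of a non--normal prime of $A'$ lies in $\Sing(A)$ and is contained in $P$, whence $L_V=L_P\subseteq P'$ and thus $J'\subseteq Q$. The only cosmetic difference is that the paper obtains $P'\in N(A)$ by invoking Remark~\ref{rem:ext-test-ideal} (the conductor inclusion $\mathcal{C}_{A'}\supseteq\mathcal{C}_A$), whereas you argue directly that if $P'\notin\Sing(A)$ then $A'_Q$ would be a localization of the normal ring $A_{P'}$.
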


\begin{proof}
The assumption and \cite[Proposition 2.10]{Eis} give
\[
S^{-1}A^{\prime}\cong S^{-1}(\Hom_{A^{\prime}}(J^{\prime},J^{\prime}%
))\cong\Hom_{S^{-1}A^{\prime}}(S^{-1}J^{\prime},S^{-1}J^{\prime}).
\]
%(for the latter isomorphism, see \cite[Proposition 2.10]{Eis}).
Hence, the result will follow from the Grauert and Remmert criterion
(Proposition \ref{prop:crit-GR}) applied to $S^{-1}A^{\prime}$ once we show
that the localized ideal $S^{-1}J^{\prime}$ satisfies the three conditions of
the criterion. First, since forming radicals commutes with localization,
$S^{-1}J^{\prime}$ is a radical ideal. Second, the image of $g$ in
$S^{-1}A^{\prime}$ is a non--zerodivisor of $S^{-1}A^{\prime}$ contained in
$S^{-1}J^{\prime}$. Third, we show that $V(\mathcal{C}_{S^{-1}A^{\prime}%
})=N(S^{-1}A^{\prime})\subseteq V(S^{-1}J^{\prime})$. For this, we first
note that
\[
V(\mathcal{C}_{S^{-1}A})=N(S^{-1}A)=\{S^{-1}\widetilde{P}\mid\widetilde{P}%
\in N(A)\text{, }\widetilde{P}\subseteq P\}.
\]
Indeed, prime ideals of $S^{-1}A$ correspond to prime ideals of $A$ contained in
$P$. Let now $Q\in N(S^{-1}A^{\prime})$. Then, as shown in Remark
\ref{rem:ext-test-ideal}, $Q$ contracts to some $S^{-1}\widetilde{P}\subseteq S^{-1}A$ 
with $\widetilde{P}\in N(A)$, $\widetilde{P}\subseteq P$. This implies that
%Then $Q$
%contains the conductor of $A'_{P}$ in $\overline{A'_{P}}=\overline{A'}_{P}
%=\overline{A}_{P}=\overline{A_{P}}$ and, thus,  the conductor of $A_{P}$ in
%$\overline{A_{P}}$. It follows, in turn, that $Q\;\!\cap A\in N(A_{P})= \{P_P\}$.
%This implies that $Q$ is a minimal associated prime of the extended ideal
%$P_P A'_{P}$ and, hence, that%
\[
Q\supseteq\sqrt{(S^{-1}\widetilde{P})(S^{-1}A^{\prime})}=\sqrt{S^{-1}(\widetilde
{P}A^{\prime})}=S^{-1}(\sqrt{\widetilde{P}A^{\prime}})\supseteq S^{-1}J^{\prime},
\]
as desired.
\end{proof}

%\noindent
In the situation of Proposition \ref{prop local grauert-remmert}, let a
non--zerodivisor $g\in L_{V}$ of $A$ be known. Then, using $(L_{V},g)$ instead
of a test pair as in Definition \ref{def:test-pair}, and proceeding as in the
previous section, we get a chain of rings
\[
A\subseteq A_{1}\subseteq\dots\subseteq A_{m}\subseteq\overline{A}%
\]
such that $S^{-1}(A_{m})$ is normal and, hence, equal to $S^{-1}\overline
{A}=\overline{S^{-1}A}$ for all $S=A\setminus P$, $P\in V$.

\vspace{0.1cm}

Summing up, we are lead to Algorithms \ref{alg1} and \ref{alg2} below.

\begin{algorithm}[H]                      % enter the algorithm environment
\caption{Normalizing the localizations}          % give the algorithm a caption
\label{alg1}                           % and a label for \ref{} commands later in the document
\begin{algorithmic}[1]
\REQUIRE An affine domain $A = K[X_1,\dots , X_n]/I$ over a perfect field $K$, a stratum $V\in
\operatorname{Strata}(A)$,
and $0\neq g \in L_V$.
\ENSURE An ideal $U \subseteq A$\ and $d \in A$ with
$\frac 1d U\subseteq\overline{A}$ and  $S^{-1} (\frac 1d U) = \overline{S^{-1} A}$ for all 
$S=A\setminus P$, $P\in V$.
%\STATE $J := \sqrt{M}$, where $M$ is the Jacobian ideal of $I$.
%\STATE compute $L$, the intersection of all primes of $\Sing(A)$ being contained in $P$;
\RETURN the result of the GLS normalization algorithm applied to $(L_V,g)$;
\end{algorithmic}
\end{algorithm}

\begin{algorithm}[H]                      % enter the algorithm environment
\caption{Normalization via localization}          % give the algorithm a caption
\label{alg2}                           % and a label for \ref{} commands later in the document
\begin{algorithmic}[1]
\REQUIRE An affine domain $A = K[X_1,\dots , X_n]/I$ over a perfect field $K$.
\ENSURE An ideal $U \subseteq A$ and $d \in A$ such that $\overline{A}=\frac 1d U\subseteq \Quot(A)$.
\STATE $J := \sqrt{M}$, where $M$ is the Jacobian ideal of $A$;
\STATE choose  $0\neq g \in J$;
\STATE compute the strata of the singular locus $\operatorname{Strata}(A)=\{V_{1},...,V_{s}\}$;
\FORALL{$i$}
\STATE apply Algorithm \ref{alg1} to $\left(
V_{i},g\right)  $ to find an ideal $U_{i}\subseteq A$ and a power $d_i=g^{m_i}$ with
$A\subseteq \frac{1}{d_i} U_{i}\subseteq\overline{A}$ and
$S^{-1} ( \frac{1}{d_i} U_{i}) = \overline{S^{-1} A}$ for all $S=A\setminus P$, $P\in V_i$;
\ENDFOR
\STATE $m:= \max\{m_1,\dots, m_s\}$,\, $d:=g^m$,\, $U := \sum_i g^{m-m_i}U_i$;
\RETURN $(U, d)$;
\end{algorithmic}
\end{algorithm}
%\footnotetext[2]{An element in a set of prime ideals is called %maximal, if it
%is not contained in any other prime ideal of this set.}

\begin{rem}
In Algorithm \ref{alg2}, it may be more efficient to choose possibly different
non--zero elements $g_{i}\in L_{V_{i}}$. In Step 5, the algorithm computes, then, pairs
$(U_{i}^{\prime},d_{i})$ with ideals $U_{i}^{\prime}\subseteq A$ and powers
$d_{i}=g_{i}^{m_{i}}$. As explained in \cite[Remark 4.3]{GLS}, starting from
the $(U_{i}^{\prime},d_{i})$, we may always find a denominator $d\in M$ and
ideals $U_{i}\subseteq A$ such that $\frac{1}{d}U_{i}=\frac{1}{d_{i}}%
U_{i}^{\prime}$ for all $i$. Then, the desired result is $(\sum_{i}U_{i}%
,d)$.
%For example, take $d=d_1\cdots d_s$.
%In the curve case (or whenever a Noether normalization is known) we can assume
%that all $d_{i}$ lie in the same polynomial ring and can take
%$d=\operatorname*{lcm}\left(  d_{1},...,d_{s}\right)  $.

\end{rem}

\begin{rem}
In Algorithm \ref{alg2}, it is sufficient to consider the minimal strata,
that is, the strata $V$ such that $L_{V}$ is minimal with respect to inclusion. Denote, as
above, the minimal primes of the singular locus of $A$ by $P_{1},...,P_{r}$.
We can obtain the minimal $L_{V}$ as all possible intersections
${\textstyle\bigcap\nolimits_{i\in\Gamma}} P_{i}$, with subsets $\Gamma
\subseteq\{1,...,r\}$ which are maximal with the property that ${\textstyle\sum
\nolimits_{i\in\Gamma}} P_{i}\neq\left\langle 1\right\rangle $.
\end{rem}

\begin{exmp}
We come back to the coordinate ring $A$ of the curve $C$ with defining
polynomial $f(X,Y)=X^{4}+Y^{2}(Y-1)^{3}$ from Example \ref{ex a3 E6} to
discuss normalization via localization. The curve $C$ has a double point of
type $A_{3}$ at $\left(  0,0\right)  $ and a triple point of type $E_{6}$ at
$\left(  0,1\right)  $. We illustrate Algorithm \ref{alg2}, using for both
singular points the non--zerodivisor $g=x$: For the $A_{3}$--singularity,
consider%
\[
P_{1}=\left\langle x,y\right\rangle _{A}\;{\text{ and }}\;S_{1}=A\setminus
P_{1}\text{.}%
\]
The local normalization algorithm yields $\overline{S_{1}^{-1}A}=S_{1}%
^{-1}(\frac{1}{d_{1}}U_{1})$, where
\[
d_{1}=x^{2}\;{\text{ and }}\;U_{1}=\left\langle x^{2},\text{ }y(y-1)^{3}%
\right\rangle _{A}\text{.}%
\]
For the $E_{6}$--singularity, considering%
\[
P_{2}=\left\langle x,y-1\right\rangle _{A}\;{\text{ and }}\;S_{2}=A\setminus
P_{2}\text{,}%
\]
we get\thinspace\ $\overline{S_{2}^{-1}A}=S_{2}^{-1}(\frac{1}{d_{2}}U_{2})$,
where%
\[
d_{2}=x^{2}\;{\text{ and }}\;U_{2}=\left\langle x^{2},\text{ }xy^{2}\left(
y-1\right)  ,\text{ }y^{2}\left(  y-1\right)  ^{2}\right\rangle _{A}\text{.}%
\]
Combining the local contributions, we get%
\[
\frac{1}{d}U=\frac{1}{d_{1}}U_{1}+\frac{1}{d_{2}}U_{2}\text{,}%
\]
with $d=x^{2}$ and
\[
U=\left\langle x^{2},\text{ }xy^{2}\left(  y-1\right)  ,\text{ }%
y(y-1)^{3},\text{ }y^{2}\left(  y-1\right)  ^{2}\right\rangle _{A}\text{.}%
\]
A moment's thought shows that $U$ coincides with the ideal $U^{(2)}$ found in
Example \ref{ex a3 E6}.
\end{exmp}

The local--to--global approach is usually much faster than the global
algorithm even when not run in parallel. The reason is that the minimal primes
of the singular locus are much simpler than the singular locus itself.
Therefore, in the local--to--global case, the intermediate rings are much
easier to handle. Most notably, the representations of the intermediate rings
as affine rings involve considerably less variables than in the global case.
In the following example, we exemplify this difference.

\begin{exmp}
Consider the \emph{projective} plane curve defined by the polynomial
\[
f_{1,4}=(X^{5}+Y^{5}+Z^{5})^{2}-4(X^{5}Y^{5}+X^{5}Z^{5}+Y^{5}Z^{5}%
)\in\mathbb{Q}[X,Y,Z],
\]
which will be reconsidered in Section \ref{sect:timings} with respect to
timings. After the coordinate transformation $Z\mapsto3X-2Y+Z$, all
singularities of the projective curve lie in the affine chart $Z\neq0$. Write
\[
f=f_{1,4}(X,Y,3X-2Y+1)\in \mathbb{Q}[X,Y]=:W
\]
for the defining polynomial of the affine curve, and let $A=\mathbb{Q}[x,y]=
W/\langle f\rangle$.

The curve has $15$ singular points: the radical of the Jacobian ideal $M$
decomposes as
\begin{align*}
\sqrt{M} & =\langle y,121x^{4}+142x^{3}+64x^{2}+13x+1\rangle\cap\langle
y,2x+1\rangle\\
& \cap\langle211y^{4}-131y^{3}+51y^{2}-11y+1,3x-2y+1\rangle\\
& \cap\langle11y^{4}-23y^{3}+19y^{2}-7y+1,x\rangle\cap\langle y+1,x+1\rangle
\cap\langle3y-1,x\rangle.
\end{align*}
%Although each of these ideals is simple, the radical of the singular locus is much 
more complicated (see below).
We compare the global approach to the local strategy at the singularity
corresponding to the test ideal $J=\langle y,2x+1\rangle$.

In the local setting, we use the non--zerodivisor $g=y$ and compute the ideal
quotient
\begin{align*}
U_{1} & =gJ:J\\
& =\langle y,29282x^{9}+83369x^{8}+105668x^{7}+78296x^{6}+37382x^{5}%
+11926x^{4}\\
& \qquad\; +2542x^{3}+349x^{2}+28x+1\rangle.
\end{align*}
We observe that in addition to $y$, the ideal $U_{1}$ requires only one more
generator. Hence, the representation of $A_{1}\cong\frac{1}{g}(gJ:J)$ as an
affine ring $A_{1}=A[T_{1}]/I_{1}$ requires only one additional variable
$T_{1}$. The ideal $I_{1}$ is generated by $10$ linear relations and one
quadratic relation. Next, we compute the radical of the image of $J$ in
$A_{1}$. Technically, this means to compute the radical $\sqrt{J+I_{1}}$ in
the polynomial ring $W[T_{1}]$ (here, by abuse of notation, we denote the 
preimage of $J$ in the polynomial ring also by $J$). The ideal $I_{1}$ is quite
complicated. Since $J$ is generated by linear polynomials, however, the
reduced Gr\"obner basis of $J+I_{1}$ is very simple. It is easily computed as
$$J+I_{1}=\langle Y,2X+1,16384T_{1}^{2}-T_{1}+625\rangle.$$ As a consequence,
the computation of the radical $$J_{1}=\sqrt{J+I_{1}}=\langle Y,2X+1,128T_{1}
-25\rangle$$ is cheap as well. In the next step, $A_{2}$ can be represented as
an affine algebra over $A$ with, again, only one new variable. Hence,
verifying that $A_{2}$ is already a local contribution to $\overline{A}$ at
the singularity corresponding to $J$ is also cheap.

In contrast, the global approach uses the test ideal $J=\sqrt{M}$, which is
generated by one polynomial of degree $3$ and three polynomials of degree $6$.
As a non--zerodivisor, we consider the lowest degree generator $g = 3x^{2}
y-2xy^{2}+xy$. As in the local case, the first ideal quotient $gJ : J$ is
easily obtained. However, in addition to $g$, it requires three more
generators. Hence, as an affine algebra over $A$, it is represented as $A_{1}
= A[T_{1}, T_{2}, T_{3}] / I_{1}$, where the ideal of relations $I_{1}
\subseteq A[T_{1}, T_{2}, T_{3}]$ is generated by $6$ linear and $6$ quadratic
relations. No significant reduction occurs in $J + I_{1}$ since $J$ does not
contain any linear polynomial. The complexity of Buchberger's algorithm grows
doubly--exponentially in the number of variables. Compared to the local case,
this increase in complexity makes the computation of $\sqrt{J + I_{1}}$
considerably more expensive. In fact, \textsc{Singular} does not compute the
corresponding Gr\"obner basis within $2000$ seconds.
\end{exmp}

\section{Modular Methods} \label{secModMeth}

Algorithm \ref{alg2} from Section \ref{sect:norm-via-loc} is parallel in
nature since the computations of the local normalizations do not depend on
each other. In this section, we describe a modular way of parallelizing both
the GLS normalization algorithm from Section \ref{sect:glob-norm-alg} and the
local--to--global algorithm from Section \ref{sect:norm-via-loc} in the case
where $K={\mathbb{Q}}$ is the field of rationals. One possible approach is to
just replace \emph{all} involved Gr\"{o}bner basis respectively radical
computations by their modular variants as introduced by \cite{ArnoldE} 
and \cite{IPS}. These
variants are either probabilistic or require rather expensive tests to verify
the results at the end. In order to reduce the number of verification tests,
we provide a direct modularization for the normalization algorithms. The
approach we propose requires, in principle, only one verification at the end.
In the local--to--global setup, however, it is reasonable to additionally
handle the Gr\"{o}bner basis computation for the Jacobian ideal, the
subsequent primary decomposition, and the recombination of the local results
by modular techniques. We exemplarily describe the modularization of the GLS
normalization algorithm as outlined in Section \ref{sect:glob-norm-alg}. Each
of the local normalizations in Algorithm \ref{alg2} from Section
\ref{sect:norm-via-loc} can be modularized similarly.

\vspace{0.15cm}
Fix a global monomial ordering $>$ on the semigroup of
monomials in the set of variables $X=\{X_{1},\ldots,X_{n}\}$. Consider the
polynomial rings $W={\mathbb{Q}}[X]$ and, given an integer $N\geq2$,
$W_{N}=(\mathbb{Z}/N\mathbb{Z})[X]$. If $T\subseteq W$ or $T\subseteq W_{N}$
is a set of polynomials, then
denote by $\operatorname{LM}(T):=\{\operatorname{LM}(f)\mid f\in T\}$ its set
of leading monomials. If $\frac{a}{b}\in\mathbb{Q}$ with $\gcd(a,b)=1$ and
$\gcd(b,N)=1$, set $\left(  \frac{a}{b}\right)  _{N}:=(a+N\mathbb{Z}%
)(b+N\mathbb{Z})^{-1}\in\mathbb{Z}/N\mathbb{Z}$. If $f\in W$ is a polynomial
such that $N$ is coprime to any denominator of a coefficient of $f$, then
$f_{N}\in W_{N}$ is the polynomial obtained by reducing each coefficient
modulo $N$ as just described. If $H=\{h_{1},\dots,h_{t}\}$ is a Gr\"{o}bner 
basis in $W$ such that $N$ is coprime to any denominator in any $h_{i}$, 
then write $H_{N}=\{(h_{1})_{N},\dots,(h_{t})_{N}\}$.  Given an ideal 
$I\subseteq W$, set $I_{N}=\left\langle f_{N}\mid 
f\in I\cap\mathbb{Z}[X]\right\rangle \subseteq W_{N}$
and $(W/I)_{N}=W_{N}/I_{N}$.

\begin{rem}\label{rem:bad-primes-in-Farey}
For practical purposes, the ideal $I\subseteq W$ is given by a set of generators 
$f_{1},\ldots,f_{r}$. Then for all but finitely many primes $p$, the 
ideal $I_p$ can be realized via the equality
\[
I_{p}=\left\langle (f_{1})_{p},...,(f_{r})_{p}\right\rangle \subseteq
W_{p}\text{.}%
\]
When performing the modular algorithm described below, we reject a prime $p$ 
if the ideal on the right hand side is not well--defined. Otherwise, we work with this 
ideal instead of $I_p$. The finitely many primes where the two ideals differ will not 
influence the result if we apply error tolerant rational reconstruction (see Remark 
\ref{rmk error tolerant farey}).
\end{rem}

From a practical point of view, we work
with ideals of the polynomial ring $W$ containing $I$, but think of them
as ideals of the quotient ring $A=W/I$. Therefore, we simplify our notation
as follows: If $I\subseteq J \subseteq W$ are ideals, then we denote the
ideal induced by $J$ in $A$ also by $J$.  Vice versa, if $J \subseteq A$
is an ideal, then its preimage in $W$ is also denoted by $J$.
Similarly for $W_N$.

From now on, $I=\left\langle f_{1},\ldots,f_{r}\right\rangle \subseteq W$
will be a prime ideal. We wish to compute the normalization of the
affine domain $A=W/I$ using modular methods. For this, we fix a 
polynomial $d\in W$ which represents a non--zero element
in the Jacobian ideal $M$ of $A$. This element of $M$ will also be denoted 
by $d$. It will  serve as  a \textquotedblleft
universal denominator\textquotedblright\ for all normalizations 
in positive characteristic as well as  for the final normalization in characteristic zero 
(see Remark \ref{remJacobian} for the choice of denominators). In 
characteristic zero, we write $U(0)$ for the ideal of $A$ which satisfies 
$\frac{1}{d}U(0)=\overline A$, and $G(0)$ for the reduced Gr\"{o}bner 
basis\footnote{Recall that reduced Gr\"{o}bner bases
are uniquely determined. For practical purposes, however, we do not 
need to reduce the Gr\"{o}bner bases involved since the lifting process described
below only requires that the result is  uniquely
determined by the algorithm.} of $U(0)$. Furthermore, we write $V(0)\subseteq
A[T_{1},\ldots,T_{s}]$ for the ideal\footnote{With respect to ideals of
$W[T_{1},\ldots,T_{s}]$ and $A[T_{1},\ldots,T_{s}]$, we use the same setup and notation
as for ideals of $W$ and $A$.} of relations on the elements of $\frac{1}{d}G(0)$ which represents 
$\overline{A}$ as an $A$--algebra as in Lemma \ref{lemma:k-algebra-structure}.
We denote the reduced Gr\"obner basis of $V(0)$ by $R(0)$. In the same way, 
if $p$ is a prime number which does not divide any denominator in the 
reduced Gr\"{o}bner basis of $I$ and such that $A_p$ is a domain
and $d_{p}$ is non--zero and contained in the conductor\footnote{From 
a practical point of view, we check whether $d_p$ is in the Jacobian ideal 
of $A_p$.} of $A_p$, we use $U(p)$, $G(p)$, $V(p)$, and $R(p)$ in 
characteristic $p$.

Note that $G(0)_{p}$ is not necessarily equal to $G(p)$. However, as
we will show in\ Lemma \ref{lem lucky lift} below, equality holds for 
all but finitely many primes $p$. Relying on this fact, the basic idea of
the modular normalization algorithm can be described as follows. First,
compute the Jacobian ideal $M$ of $A$ and choose a polynomial $d\in W$ representing
a non--zero element $d\in M$. Second, choose a set $\mathcal{P}$ of prime numbers
at random, and compute, for each $p\in \mathcal{P}$, reduced Gr\"{o}bner 
bases $G(p)\subseteq W_{p}$ such that $\frac{1}{d_{p}}\left\langle G(p)\right\rangle
\subseteq\Quot(A_{p})$ is the normalization of $A_{p}$. Third, lift the
modular Gr\"{o}bner bases to a set of polynomials $G\subseteq W$ and define
$U:=\left\langle G\right\rangle $. We then expect that $U=U(0)$ and $G=G(0)$.

The lifting process has two steps. First, assuming that all
$\operatorname{LM}(G(p))$, $p\in\mathcal{P}$, are equal, we can lift the
Gr\"{o}bner bases in the set $\mathcal{GP}:=\{G(p)\mid p\in\mathcal{P}\}$ to a
set of polynomials $G(N)\subseteq W_{N}$, with $N:=\prod_{p\in\mathcal{P}}p$.
For this, apply the Chinese remainder algorithm to the coefficients of the
corresponding polynomials occurring in the  $G(p)$, $p\in\mathcal{P}$.
Second, compute a set of polynomials $G\subseteq W$ by lifting the modular
coefficients occurring in $G(N)$ to rational coefficients as described in 
\cite{FareyPaper}:

\begin{rem}
\label{rmk error tolerant farey} Rational reconstruction via the Chinese remainder theorem
and Gaussian reduction  is error--tolerant in the following sense: Let
$N_{1}$ and $N_{2}$ be integers with $\gcd(N_{1},N_{2})=1$, and let $\frac
{a}{b}\in\mathbb{Q}$ with $\gcd(a,b)=\gcd(N_{1},b)=1$. Set
$r_{1}:=\left(  \frac{a}{b}\right)  _{N_{1}}\in\mathbb{Z}/N_{1}\mathbb{Z}$, let
$r_{2}\in\mathbb{Z}/N_{2}\mathbb{Z}$ be arbitrary, and denote by $r$ the image of
$(r_{1},r_{2})$ under the isomorphism%
\[
\mathbb{Z}/N_{1}\mathbb{Z}\times\mathbb{Z}/N_{2}\mathbb{Z}\rightarrow
\mathbb{Z}/(N_{1}N_{2}) \mathbb{Z}\text{.}%
\]
Lifting $r$ to a rational number via Gaussian reduction will generate,
starting from $(a_{0},b_{0})=(N_1N_2,0)$ and $(a_{1},b_{1})=(r,1)$, a sequence 
of rational numbers $(a_{i},b_{i})$ obtained by setting
\[
(a_{i-2},b_{i-2})=q_{i}(a_{i-1},b_{i-1})+(a_{i},b_{i}),
\]
where $q_i$ is chosen such that $(a_i,b_i)$ has minimal Euclidean length. 
Computing this sequence until the Euclidean length does not decrease strictly 
any more, we obtain a tuple $(a_{i},b_{i})$ with $\frac{a_{i}}{b_{i}}=\frac{a}{b}$,
provided that $N_{2}\ll N_{1}$. For details, see \cite{FareyPaper}.
\end{rem}

Just as for $\mathcal{GP}$, we proceed for the set 
of reduced Gr\"obner bases $\mathcal{RP}%
:=\{R(p)\mid p\in\mathcal{P}\}$ giving the modular algebra relations.

As for other modular algorithms based on Chinese remaindering, we need
suitably adapted notions of a lucky prime and a sufficiently large set of
lucky primes:

\begin{defn}
\label{defnLucky} Using the notation introduced above, we define:
\begin{enumerate}
\item A prime number $p$ is called \emph{lucky} for $A$ if $U(0)_{p}=U(p)$,
$V(0)_{p}=V(p)$, and the following hold:

\begin{enumerate}
\item $A_{p}$ is a domain.

\item $d_{p}$ is a non--zero element in the conductor of $A_{p}$.

\item $\LM(G(0))=\LM(G(p))$.

\item $\LM(R(0))=\LM(R(p))$.
\end{enumerate}

\noindent Otherwise $p$ is called \emph{unlucky} for $A$.

\item A finite set $\mathcal{P}$ of lucky primes for $A$ is called \emph{sufficiently
large} for $A$ if
\begin{align*}
\prod_{p\in\mathcal{P}}p\geq\max\left\{2\cdot\left\vert c\right\vert ^{2}\;\bigg|\;
\begin{array}{l} \text{$c$ a denominator or numerator of a} \\ \text{coefficient occurring in $G(0)$ or  $R(0)$}
\end{array} \right\}
\end{align*}
\end{enumerate}
\end{defn}

\begin{rem}
A modular algorithm for the basic task of computing Gr\"obner bases is presented
in \cite{ArnoldE} and \cite{IPS}. In contrast to our situation here, where we wish to 
find the ideal $U(0)$ by computing its reduced Gr\"obner basis $G(0)$, Arnold's 
algorithm starts from an ideal which is already given. 
If $p$ is a prime number, $J\subset W$ is an ideal, $H(0)$ is the reduced 
Gr\"{o}bner basis of $J$, and $H(p)$ the reduced Gr\"{o}bner basis of $J_{p}$, 
then $p$ is lucky for $J$ in the sense of Arnold if $\LM(H(0))=\LM(H(p))$. 
It is shown in \cite[Thm.\ 5.12 and 6.2]{ArnoldE} that if $p$ is lucky for $J$ in this sense,
then $H(0)_{p}$ is well--defined and equal to $H(p)$. By 
\cite[Cor.\ 5.4 and Thm.\ 5.13]{ArnoldE},  all but finitely many primes 
are Arnold--lucky for $J$.
Moreover, if $\mathcal{P}$ is a set of primes satisfying Arnold's 
condition $\LM(H(0))=\LM(H(p))$ for all $p\in \mathcal{P}$, and such
that $\mathcal{P}$ is sufficiently large with respect to the coefficients
occurring in $H(0)$, then the $H(p)$, $p\in \mathcal{P}$, lift to $H(0)$.
%Here, the individual generators are identified via their leading
%monomials.

In our situation, if $p$ is a prime number, we find $U(p)$ on our way, 
but $U(0)_{p}$ is only known to us after $U(0)$ has been computed.  
Therefore, the condition $U(0)_{p}=U(p)$ in our definition of lucky can 
only be checked a posteriori. Similarly for $V(0)_{p}=V(p)$. However, 
when performing our modular algorithm, by part 1 of Lemma 
\ref{lem lucky lift} below and Remark \ref{rmk error tolerant farey},
there are only finitely many primes not satisfying these conditions
and these primes will not influence the result of the algorithm.
\end{rem}

\begin{lem}
\label{lem lucky lift}With notation as above, we have:

\begin{enumerate}
\item All but a finite number of primes are lucky for $A$.

\item If $\mathcal{P}$ is a sufficiently large set of lucky primes for
$A$, then the reduced Gr\"{o}bner bases $G(p)$, $p\in\mathcal{P}$, lift to the
reduced Gr\"{o}bner basis $G(0)$. In the same way, the $R(p)$, $p\in
\mathcal{P}$, lift to $R(0)$.
\end{enumerate}
\end{lem}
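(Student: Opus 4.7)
The plan is to handle the two parts separately, with essentially all of the work concentrated in part (1); part (2) will then follow from a standard Chinese remainder theorem plus rational reconstruction argument.

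For part (1), my strategy is to walk through the finitely many operations performed by the GLS normalization algorithm in characteristic zero---Gr\"obner basis computations, radical computations, ideal quotients, and the construction of the relation ideal via Lemma \ref{lemma:k-algebra-structure}---and to show that for each of these operations there are only finitely many primes $p$ at which the corresponding computation in characteristic $p$ fails to agree with the reduction mod $p$ of the characteristic zero result. I would formalize this as an induction along the chain $A = A_0 \subseteq \cdots \subseteq A_m = \overline{A}$, proving that for all but finitely many $p$, every intermediate ideal arising in characteristic $0$ reduces mod $p$ to the corresponding ideal computed in characteristic $p$. Each inductive step invokes the Arnold-type results of \cite{ArnoldE} for Gr\"obner bases and the results of \cite{IPS} for radicals and primary decompositions; since the number of steps in the GLS algorithm is finite, the union of the exceptional sets remains finite. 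Once the inductive claims $U(0)_p = U(p)$ and $V(0)_p = V(p)$ are in hand, the equalities $\LM(G(0)) = \LM(G(p))$ and $\LM(R(0)) = \LM(R(p))$ in Definition \ref{defnLucky} are immediate from the uniqueness of reduced Gr\"obner bases.

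Conditions (a) and (b) of Definition \ref{defnLucky} are handled separately. For (a), since $I$ is prime in $W$, $I_p$ remains prime in $W_p$ for all but finitely many $p$ by a standard generic flatness argument, so $A_p$ is a domain. For (b), the element $d$ lies in the Jacobian ideal $M$ of $A$ by construction; for all but finitely many $p$, the Jacobian ideal of $A_p$ equals $M_p$ and $d_p$ is a non--zero element of it, which is contained in $\mathcal{C}_{A_p}$ by Remark \ref{remJacobian}. The hard part of (1) will be the inductive step involving the radical $\sqrt{J A_{i-1}}$: this is computed via primary decomposition and a cascade of auxiliary Gr\"obner bases, each contributing its own finite exceptional set, and one must verify that the whole cascade commutes with reduction mod $p$ outside a finite exceptional set. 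For this I would appeal directly to the modular primary decomposition results of \cite{IPS}.

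For part (2), once we know that $\LM(G(p)) = \LM(G(0))$ for every $p \in \mathcal{P}$, the elements of $G(p)$ and of $G(0)$ are indexed by the same set of leading monomials, and for each such monomial the rational coefficient $c$ appearing in $G(0)$ reduces mod $p$ to the coefficient appearing in $G(p)$ (because $G(0)_p = G(p)$, which is part of the lucky hypothesis). Applying the Chinese remainder theorem across all $p \in \mathcal{P}$ determines the image of $c$ modulo $N := \prod_{p \in \mathcal{P}} p$. The size condition in Definition \ref{defnLucky}(2) guarantees $N \ge 2|c|^2$, which is the classical bound ensuring that the Gaussian-reduction lifting of Remark \ref{rmk error tolerant farey} recovers $c$ uniquely from its image modulo $N$. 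Carrying this out coefficient by coefficient yields the lift of $\{G(p)\}_{p \in \mathcal{P}}$ to $G(0)$, and the identical argument applied to the $R(p)$ yields the lift to $R(0)$.
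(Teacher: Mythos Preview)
Your approach to part (1) is correct but takes a genuinely different route from the paper. You propose a step--by--step induction along the GLS chain $A=A_0\subseteq\cdots\subseteq A_m=\overline{A}$, controlling each ideal quotient and each radical $\sqrt{JA_{i-1}}$ modulo $p$ by appealing to \cite{ArnoldE} and \cite{IPS}. The paper bypasses this entirely: it argues directly that $\frac{1}{d_p}U(0)_p$ is integral over $A_p$ (the integrality relations encoded in $V(0)$ reduce well) and that it is normal (the Grauert--Remmert test is a single Gr\"obner basis computation, hence commutes with reduction mod a general $p$); since integral plus normal plus containing $A_p$ characterizes $\overline{A_p}$, this forces $U(0)_p=U(p)$ without ever tracking the intermediate rings or worrying about how radicals behave under reduction. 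Your approach is more algorithmic and would work, but it is heavier: you must control the radical step explicitly (which you flag as the hard part), whereas the paper's argument needs only one Gr\"obner basis reduction at the very end of the chain. One small slip in your write--up: once $U(0)_p=U(p)$, the equality $\LM(G(0))=\LM(G(p))$ is not quite ``immediate from uniqueness of reduced Gr\"obner bases''; you still need that $G(0)_p$ is the reduced Gr\"obner basis of $U(0)_p$, i.e.\ that $p$ is Arnold--lucky for $U(0)$, which is an additional (easy) finitely--many--exceptions claim the paper states explicitly. Your part (2) matches the paper's argument.
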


\begin{proof}
With respect to part 1, it is clear that conditions (1a) and (1b) in our
definition of lucky are true for all but finitely many primes. Moreover,
$\frac{1}{d_p}U(0)_p$ is integral over $A_p$ for all but finitely many $p$.
Since testing normality via the Grauert and Remmert criterion amounts 
to a Gr\"{o}bner basis computation, and since reducing a Gr\"{o}bner 
basis modulo a sufficiently general prime $p$ gives a 
Gr\"{o}bner basis of the reduced ideal, we conclude
that $U(0)_{p}=U(p)$ for all but finitely many primes.
Furthermore, if  $U(0)_{p}=U(p)$, condition (1c) from our definition of 
lucky is equivalent to asking that $p$ is lucky for $U(0)$ 
in the sense of Arnold, so that also this condition  holds 
for all but finitely many primes. For $V(0)_{p}=V(p)$ and condition
(1d), we may argue similarly since finding the ideal of algebra relations 
amounts to another Gr\"obner basis computation.

For part 2, let $\mathcal{P}$ be a sufficiently large set of lucky
primes for $A$. Then, as pointed out above, $G(0)_{p}$ is
well--defined and equal to $G(p)$ for all $p\in \mathcal{P}$.
Furthermore, since $\mathcal{P}$ is sufficiently large, the
$G(0)_{p}$, $p\in\mathcal{P}$, lift to $G(0)$. In the same way,
we may argue for the relations.
\end{proof}

From a theoretical point of view, the idea of the algorithm is now as follows:
Consider a sufficiently large set $\mathcal{P}$ of lucky primes for $A$,
compute the reduced Gr\"{o}bner bases $G(p)$, $p\in\mathcal{P}$, and
lift the results to the reduced Gr\"{o}bner basis $G(0)$
as described above.

From a practical point of view, we face the problem that the conditions
(1c), (1d), and (2) from Definition
\ref{defnLucky} cannot be tested a priori. To remedy the situation, 
we proceed in a randomized way. First, we fix an integer
$t\geq1$ and choose a set of $t$ primes $\mathcal{P}$ at random. 
Second, we delete all primes $p$ from $\mathcal{P}$ which do not 
satisfy conditions (1a) and (1b). Third, we compute 
$\mathcal{GP}=\{G(p)\mid p\in\mathcal{P}\}$ and
$\mathcal{RP}=\{R(p)\mid p\in\mathcal{P}\}$,
and use the following test to modify $\mathcal{P}$ so that all primes in 
$\mathcal{P}$ satisfy (1c) and (1d) with high probability:

\vspace{0.2cm}

\noindent\emph{\textsc{deleteUnluckyPrimesNormal:} Define an equivalence
relation on $\mathcal{P}$ by setting $p\sim q:\Longleftrightarrow
\big(\LM(G(p))=\LM(G(q))$ and $\LM(R(p))=\LM(R(q))\big)$. Then replace
$\mathcal{P}$ by an equivalence class of largest\footnote{If 
applicable, take Remark \ref{rem:del-lucky-primes-repeated} below 
into account.} cardinality, and change
$\mathcal{GP}$ and $\mathcal{RP}$ accordingly.} 

\vspace{0.2cm}

Only now, we lift the Gr\"{o}bner bases in $\mathcal{GP}$ and $\mathcal{RP}$ 
to sets of  polynomials $G$ and $R$, respectively. Since we do not know 
whether all primes in the chosen equivalence class are indeed lucky and
whether the class is sufficiently large, a final verification step is needed:
We have to check whether $\frac{1}{d}\langle G\rangle$ is integral over 
$A$ and normal. Since this can be expensive, especially if the result is 
false, we test the result at first in positive characteristic:\vspace{0.2cm}

\noindent\emph{\textsc{pTestNormal:} Randomly choose a prime number 
$p\notin\mathcal{P}$ such that $A_{p}$ is a domain, $d_{p}$ is a non--zero 
element in the conductor of $A_{p}$, and $p$ does not divide the numerator 
and denominator of any coefficient occurring in a polynomial in $G$, $R$,
or $\left\{  {f_{1}},\ldots,{f_{r}}\right\}$. Return
true if $\frac{1}{d_{p}}\left\langle G_{p}\right\rangle $ is the normalization
of $A_{p}$ and satisfies the relations $R_{p}$, and  false
otherwise.}

\vspace{0.2cm}

If \textsc{pTestNormal} returns false, then $\mathcal{P}$ is not sufficiently
large for $A$ or not all primes in $\mathcal{P}$ are lucky (or the extra
prime chosen in \textsc{pTestNormal} is unlucky).
In this case, we enlarge the set $\mathcal{P}$ by $t$ primes 
not used so far and repeat the whole process.
On the other hand, if \textsc{pTestNormal} returns true, then most likely
$G=G(0)$ and, thus, $\frac{1}{d}\left\langle G\right\rangle = \overline{A}$. 
It makes, then, sense to verify the result over the rationals by applying
the following lemma. If the verification fails, we enlarge $\mathcal{P}$ and 
repeat the process.

\begin{lem}
With notation as above, the ring $\frac{1}{d}\left\langle G\right\rangle 
\subseteq\Quot(A)$ is the normalization of $A$ if and only if the
following two conditions hold:

\begin{enumerate}
\item The ring $\frac{1}{d}\left\langle G\right\rangle $ is integral over $A$.
This holds if $G$ and $R$ are Gr\"{o}bner bases, and the elements of
$\frac{1}{d}G $ satisfy the relations $R$.

\item The ring $\frac{1}{d}\left\langle G\right\rangle $ is normal.
Equivalently, $\frac{1}{d}\left\langle G\right\rangle $ satisfies the 
conditions of the Grauert and Remmert criterion.
\end{enumerate}
\end{lem}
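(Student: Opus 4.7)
The plan is a two--direction argument whose substance lies in the reverse implication. The forward direction is immediate from the definitions: if $\frac{1}{d}\langle G\rangle = \overline{A}$, then it is integral over $A$ and normal, so both (1) and (2) are satisfied.

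For the converse, I would proceed as follows. By the construction of the (modular) normalization algorithm, $d \in \langle G\rangle$, so $1 = d/d$ lies in $\frac{1}{d}\langle G\rangle$; combined with the ring structure coming from the relations $R$, this gives inclusions $A \subseteq \frac{1}{d}\langle G\rangle \subseteq \Quot(A)$. Condition (1) then places $\frac{1}{d}\langle G\rangle$ inside $\overline{A}$. Since $\frac{1}{d}\langle G\rangle$ contains $A$, its field of fractions coincides with $\Quot(A)$. Every element of $\overline{A}$ is integral over $A$ and hence over $\frac{1}{d}\langle G\rangle$; by the normality assumption (2), it already lies in $\frac{1}{d}\langle G\rangle$, giving the desired equality $\frac{1}{d}\langle G\rangle = \overline{A}$.

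The two practical sub--claims inside (1) and (2) can be established as follows. For (1), Lemma \ref{lemma:k-algebra-structure} tells us that the relations described there include the quadratic relations $T_iT_j - \sum_k \beta_{ijk}T_k$, so that $A[T_1,\ldots,T_s]/\langle R\rangle$ is generated as an $A$--module by $1, T_1,\ldots,T_s$ and is consequently module--finite over $A$. The hypothesis that the elements of $\frac{1}{d}G$ satisfy $R$ yields a well--defined $A$--algebra homomorphism $A[T]/\langle R\rangle \to \Quot(A)$, whose image is $\frac{1}{d}\langle G\rangle$; being the image of a module--finite extension, it is integral over $A$. The Gr\"obner basis property on $G$ and $R$ serves to pin down $\langle G\rangle$ and $\langle R\rangle$ unambiguously, so that these ideal--theoretic statements really refer to the objects at hand. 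For (2), normality of $\frac{1}{d}\langle G\rangle$ is then checked by the Grauert and Remmert criterion (Proposition \ref{prop:crit-GR}) applied to $\frac{1}{d}\langle G\rangle$ together with a test pair obtained from any test pair of $A$ via Remark \ref{rem:ext-test-ideal}.

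The main obstacle I anticipate is the careful handling of the sub--claim in (1), namely that one must confirm that $R$, as produced by the algorithm, really contains all quadratic and linear syzygy relations of Lemma \ref{lemma:k-algebra-structure}; otherwise module--finiteness of $A[T]/\langle R\rangle$ is not guaranteed and integrality of $\frac{1}{d}\langle G\rangle$ cannot be deduced. In the modular lifting process this holds because $R$ is constructed to be a Gr\"obner basis of the full relation ideal $V(0)$, but the point should be made explicit in the write--up.
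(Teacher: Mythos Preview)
Your main if-and-only-if argument is correct and is essentially the same short argument the paper gives: integrality puts $\frac{1}{d}\langle G\rangle$ inside $\overline{A}$, and normality then forces equality.

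The gap is in your treatment of the sub-claim in (1). You invoke Lemma~\ref{lemma:k-algebra-structure} to assert that $R$ contains the quadratic relations $T_iT_j-\sum_k\beta_{ijk}T_k$, and hence that $A[T]/\langle R\rangle$ is module-finite over $A$. But in the setting of this lemma, $R$ is \emph{not} known to be the characteristic-zero relation ideal $R(0)$: it is merely a set of polynomials obtained by lifting the $R(p)$ via Chinese remaindering and rational reconstruction, and the whole point of the verification step is to test whether this lift happens to be correct. Your proposed fix in the last paragraph (``$R$ is constructed to be a Gr\"obner basis of the full relation ideal $V(0)$'') assumes precisely what is to be verified, so it is circular. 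Without that, you have no a~priori reason to know that $\langle R\rangle$ contains polynomials with leading terms $T_iT_j$, and module-finiteness of $A[T]/\langle R\rangle$ is not established.

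The paper closes this gap by a different route: it uses the hypothesis that $R$ is a Gr\"obner basis together with the fact that $R$ was lifted coefficientwise from the reduced Gr\"obner bases $R(p)$. These two facts force $\LM(\langle R\rangle)=\LM(\langle R(p)\rangle)$, so in particular $\dim\langle R\rangle=\dim\langle R(p)\rangle$. Now $R(p)$ really \emph{is} the relation ideal in characteristic $p$, so Lemma~\ref{lemma:k-algebra-structure} applies there, the initial ideal contains all $T_iT_j$, and module-finiteness (hence integrality) transfers to $A[T]/\langle R\rangle$. In short, the structural information you need about $R$ has to come from characteristic $p$ via the Gr\"obner basis condition, not from Lemma~\ref{lemma:k-algebra-structure} applied directly in characteristic zero.
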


\begin{proof}
If $\frac{1}{d}\left\langle G\right\rangle$ is integral over $A$, then
$\frac 1d \gen G \subseteq\overline{A}$.
If $\frac{1}{d}\left\langle G\right\rangle$ is also normal, then equality holds.
Note that if $R$ is a Gr\"{o}bner
basis, then $\dim\left\langle R\right\rangle =\dim\left\langle
R(p)\right\rangle $ for all $p\in\mathcal{P}$. Hence, if the elements of $\frac{1}%
{d}G $ satisfy the relations $R$, and $G$ is a Gr\"obner basis, then
$\frac{1}{d}\left\langle G\right\rangle$ is integral over $A$.
\end{proof}

We summarize modular normalization in Algorithm \ref{algModNormal}.%
%.\footnote{The corresponding procedures are implemented in \textsc{Singular}
%in the library \texttt{modnormal.lib}.}

\begin{algorithm}[h]
\caption{Modular normalization} \label{algModNormal}
\begin{algorithmic}[1]
\REQUIRE A prime ideal $I \subseteq \Q[X]$.
\ENSURE A Gr\"obner basis $G \subseteq \Q[X]$ and $d \in \Q[X]$ such that
$\frac 1d \gen G \subseteq\Quot(A)$ is the
normalization of $A = \Q[X]/I$.
\vspace{0.1cm}
\STATE compute $M$, the Jacobian ideal of $A$;
\STATE choose a polynomial $d \in \Q[X]$ representing a non--zero element $d\in M$;
\STATE choose $\mathcal P$, a list of random primes;
\STATE $\mathcal{GP} = \emptyset$, $\mathcal{RP} = \emptyset$;
\LOOP
\FOR{$p \in \mathcal P$}
\IF{$A_p$ is not a domain or $d_p\in A_p$ is zero 
   or $d_p$ is not contained in the conductor of $A_p$}
\STATE delete $p$;
\ELSE
\STATE use the GLS algorithm to compute $G(p)$, the reduced Gr\"obner basis such that
$\frac{1}{d_p} \gen{G(p)} \subseteq \Quot(A_p)$ is
the normalization of $A_p$, and $R(p)$, the reduced Gr\"obner basis of the ideal of algebra relations;
\STATE $\mathcal{GP} = \mathcal{GP} \cup \{G(p)\}$, $\mathcal{RP} = \mathcal{RP} \cup \{R(p)\}$;
\ENDIF
\ENDFOR
\STATE $(\mathcal{GP},\mathcal{RP},\mathcal{P}) = \textsc{deleteUnluckyPrimesNormal}(\mathcal{GP},
\mathcal{RP},\mathcal{P})$;
\STATE lift $(\mathcal{GP},\mathcal{RP},\mathcal{P})$ to $G \subseteq \Q[X]$ and $R \subseteq 
W[T_{1},\dots,T_{s}]$ via Chinese remaindering
and the Farey rational map;
\IF{the lift succeeds and \textsc{pTestNormal}$(I,d,G,R,\mathcal P)$}
\IF{$\frac 1d \gen G \subseteq \Quot(A)$ is integral over $A$}
\IF{$\frac 1d \gen G \subseteq \Quot(A)$ is normal}
\RETURN $(G,d)$;
\ENDIF
\ENDIF
\ENDIF
\STATE enlarge $\mathcal P$;
\ENDLOOP
\end{algorithmic}
\end{algorithm}

\begin{rem}\label{rem:del-lucky-primes-repeated}
If the loop in Algorithm \ref{algModNormal} requires more than one round, 
we have to apply \textsc{deleteUnluckyPrimesNormal} in Step 12 with some
care. Otherwise, it may happen that always classes containing only unlucky primes are selected.
%(for instance, if only one 
% prime at a time is chosen, this happens if the first two primes are unlucky
% and lie in the same class). 
To avoid this problem, when determining the 
cardinality of the classes considered in a certain round of the loop, we 
count all prime numbers in the class selected in the previous round 
as just one element.
Then $\mathcal{P}$ will eventually contain lucky primes and
termination of the algorithm is ensured by Lemma \ref{lem lucky lift}
and Remark \ref{rmk error tolerant farey}.
\end{rem}

\begin{rem}
In Algorithm \ref{algModNormal}, the normalizations $\frac{1}{d_{p}%
}\left\langle G(p)\right\rangle $ can be computed in parallel. Furthermore, we
can parallelize the final verifications of integrality and normality.
\end{rem}

\begin{rem}
Algorithm \ref{algModNormal} is also applicable without the final tests (that
is, without the verification that $\frac{1}{d}\left\langle G\right\rangle
\subseteq\Quot(A)$ is integral over $A$ and normal). In this case, the
algorithm is probabilistic, that is, the output $\frac{1}{d}\left\langle
G\right\rangle \subseteq\Quot(A)$ is the normalization of $A$ only with high
probability. This usually accelerates the algorithm considerably.
\end{rem}

\begin{rem}
\label{remark:no relations} The computation of the algebra structure $R$ of the
normalization via lifting of the relations $R(p)$ may require a large number
of primes. Hence, if the number of cores available is limited, a better choice
is to obtain just $G$ by the modular approach and then compute the relations
$R(0)$ over the rationals. For this approach, the initial ideals of the relations
need not be tested in \emph{\textsc{deleteUnluckyPrimesNormal}} and 
\textsc{pTestNormal}.
\end{rem}

\section{Timings\label{sect:timings}}

We compare the GLS normalization algorithm\footnote{We use the implementation
available in the \textsc{Singular} library \texttt{normal.lib}.} (denoted in
the tables below by \texttt{normal}) with Algorithm \ref{alg2} from Section
\ref{sect:norm-via-loc} (\texttt{locNormal}) and Algorithm \ref{algModNormal}
from Section \ref{secModMeth} (\texttt{modNormal})\footnote{To implement our
algorithms, we have created the \textsc{Singular} libraries
\texttt{modnormal.lib} and \texttt{locnormal.lib}.}. For all modular
computations, we use the simplified algorithm as specified in Remark
\ref{remark:no relations}. Note that at this writing, modularized versions of
\texttt{locNormal} have not yet been implemented.

In many cases, it turns out that the final verification is a time consuming
step of \texttt{modNormal}. To quantify the improvement of computation times
by omitting the verification, we give timings for the resulting, now
probabilistic, version of Algorithm \ref{algModNormal} (denoted by
\texttt{modNormal}$^{\mathtt{\ast}}$ in the tables). In all examples computed
so far, the result of the probabilistic algorithm is indeed correct.

All timings are in seconds on an AMD Opteron 6174 machine with 48 cores,
2.2 GHz, and 128 GB of RAM, running a Linux operating system. Computations which
did not finish within 2000 seconds are marked by a dash. The maximum number of
cores used is written in square brackets. For the single core version of
\texttt{modNormal}, we indicate the number of primes used by the algorithm in brackets.

So far, in \textsc{Singular}, the computation of associated primes via fast
modular methods has only been implemented for the zero--dimensional case. As
the computation of associated primes is required by the local approach, we
first focus on the case of curves, where the singular locus is
zero--dimensional. 

\vspace{0.2cm}

The projective plane curves defined by the equations%
\[
f_{1,k}=\left(  X^{k+1}+Y^{k+1}+Z^{k+1}\right)  ^{2}-4\left(  X^{k+1}%
Y^{k+1}+Y^{k+1}Z^{k+1}+Z^{k+1}X^{k+1}\right)
\]
were constructed in \cite{H}. They have $3\left(  k+1\right)  $ singularities
of type $A_{k}$, provided that $k$ is even. If $k$ is odd, the curves are
reducible, in which case the normalization algorithms still work in the same way 
as in the irreducible case as long as
they do not detect a zerodivisor. After the coordinate transformation
$Z\mapsto3X-2Y+Z$, all singularities of the projective curves lie in the
affine chart $Z\neq0$. We apply the algorithm to the affine curves. The
timings for $k=2,...,5$ are shown in Table \ref{table timings1}.

\begin{table}[ht]
\centering
\caption{Timings for plane curves with many $A_k$ singularities:}
\begin{tabular}
[c]{l|cc|cc|cc|c}
& $f_{1,2}$ &  & $f_{1,3}$ &  & $f_{1,4}$ &  & $f_{1,5}$\\\hline
\texttt{normal[1]} & $.34$ &  & $14$ &  & $-$ &  & $-$\\
\texttt{locNormal[1]} & $.57$ &  & $2.0$ &  & $2.1$ &  & $38$\\
\texttt{locNormal[20]} & $.42$ &  & $1.3$ &  & $1.4$ &  & $11$\\
\texttt{modNormal[1]} & $4.4$ & \hspace{-0.25in}$(3)$\hspace{-0.15in} & $73$ &
\hspace{-0.25in}$(4)$\hspace{-0.15in} & $250$ & \hspace{-0.25in}$(5)$%
\hspace{-0.15in} & $-$\\
\texttt{modNormal[10]} & $4.1$ &  & $68$ &  & $240$ &  & $-$\\
\texttt{modNormal$^{\mathtt{\ast}}$[1]} & $.57$ & \hspace{-0.25in}$(3)$%
\hspace{-0.15in} & $7.4$ & \hspace{-0.25in}$(4)$\hspace{-0.15in} & $11$ &
\hspace{-0.25in}$(5)$\hspace{-0.15in} & $-$\\
\texttt{modNormal$^{\mathtt{\ast}}$[10]} & $.31$ &  & $2.1$ &  & $2.5$ &  &
$-$%
\end{tabular}
%$f_{1,k}=\left(  x^{k+1}+y^{k+1}+z^{k+1}\right)  ^{2}-4\left(  x^{k+1}y^{k+1}+
%y^{k+1}z^{k+1}+z^{k+1}x^{k+1}\right)$
\label{table timings1}
\end{table}

Both the local and the probabilistic modular approach have a better
performance than the GLS algorithm, and they improve further in their parallel
versions. The modular algorithm with final verification is slower, but can
still handle much bigger examples than GLS.

\vspace{0.2cm}

Timings for the affine plane curves defined by
{\footnotesize
\begin{align*}
f_{2,k}  &  =((X-1)^{k}-Y^{3})((X+1)^{k}-Y^{3})(X^{k}-Y^{3})((X-2)^{k}%
-Y^{3})((X+2)^{k}-Y^{3})+Y^{15},\\
f_{3}  &  =X^{10}+Y^{10}+(X-2Y+1)^{10}+2(X^{5}(X-2Y+1)^{5}-X^{5}Y^{5}%
+Y^{5}(X-2Y+1)^{5}),\\
f_{4}  &  =(Y^{5}+2X^{8})(Y^{3}+7(X-1)^{4})((Y+5)^{7}+2X^{12})+Y^{11},\\
f_{5}  &  =9127158539954X^{10}+3212722859346X^{8}Y^{2}+228715574724X^{6}%
Y^{4}\\
&  -34263110700X^{4}Y^{6}-5431439286X^{2}Y^{8}-201803238Y^{10}%
-134266087241X^{8}\\
&  -15052058268X^{6}Y^{2}+12024807786X^{4}Y^{4}+506101284X^{2}Y^{6}%
-202172841Y^{8}\\
&  +761328152X^{6}-128361096X^{4}Y^{2}+47970216X^{2}Y^{4}-6697080Y^{6}\\
&  -2042158X^{4}+660492X^{2}Y^{2}-84366Y^{4}+2494X^{2}-474Y^{2}-1,
\end{align*}} 
are presented in Table \ref{table timings2}.

\begin{table}[ht]
\centering
\caption{Timings for plane curves with various types of singularities:}
\begin{tabular}
[c]{l|cc|cc|cc|cc|cc|cc}
& $f_{2,7}$ &  & $f_{2,8}$ &  & $f_{2,9}$ &  & $f_{3}$ &  & $f_{4}$ &  &
$f_{5}$ & \\\hline
\texttt{normal[1]} & $7.7$ &  & $12$ &  & $383$ &  & $-$ &  & $474$ &  &
$1620$ & \\
\texttt{locNormal[1]} & $4.4$ &  & $13$ &  & $118$ &  & $1.9$ &  & $19$ &  &
$1.2$ & \\
\texttt{locNormal[20]} & $1.4$ &  & $3.3$ &  & $31$ &  & $1.4$ &  & $18$ &  &
$.93$ & \\
\texttt{modNormal[1]} & $38$ & \hspace{-0.25in}$(3)$\hspace{-0.15in} & $69$ &
\hspace{-0.25in}$(3)$\hspace{-0.15in} & $146$ & \hspace{-0.25in}$(3)$%
\hspace{-0.15in} & $142$ & \hspace{-0.25in}$(3)$\hspace{-0.15in} & $-$ &  &
$50$ & \hspace{-0.15in}$(8)$\\
\texttt{modNormal[10]} & $38$ &  & $69$ &  & $146$ &  & $84$ &  & $-$ &  &
$43$ & \\
\texttt{modNormal$^{\mathtt{\ast}}$[1]} & $.70$ & \hspace{-0.25in}$(3)$%
\hspace{-0.15in} & $1.2$ & \hspace{-0.25in}$(3)$\hspace{-0.15in} & $1.2$ &
\hspace{-0.25in}$(3)$\hspace{-0.15in} & $88$ & \hspace{-0.25in}$(3)$%
\hspace{-0.15in} & $9.8$ & \hspace{-0.25in}$(3)$\hspace{-0.15in} & $7.0$ &
\hspace{-0.15in}$(8)$\\
\texttt{modNormal$^{\mathtt{\ast}}$[10]} & $.47$ &  & $.70$ &  & $.74$ &  &
$30$ &  & $4.7$ &  & $.98$ &
\end{tabular}
\label{table timings2}
\end{table}

\vspace{0.2cm}

In Table \ref{table timings3}, we consider surfaces in $\mathbb{A}^{3}$ cut out by
{\footnotesize
\begin{align*}
f_{6,k}  &  =XY(X-Y)(X+Y)(Y-1)Z+(X^{k}-Y^{2})(X^{10}-(Y-1)^{2}),\\
f_{7,k}  &  =Z^{2}-(Y^{2}-1234X^{3})^{k}(15791X^{2}-Y^{3})(1231Y^{2}%
-X^{2}(X+158))(1357Y^{5}-3X^{11}),\\
f_{8}  &  =Z^{5}-((13X-17Y)(5X^{2}-7Y^{3})(3X^{3}-2Y^{2})(19Y^{2}%
-23X^{2}(X+29)))^{2}.
\end{align*}} 

We omit the verification step in the modular algorithm, as this is too time
consuming.

\begin{table}[ht]
\centering
\caption{Timings for the normalization of surfaces in $\mathbb{A}^{3}$:}
\begin{tabular}
[c]{l|cc|cc|cc|cc|cc|cc}
& $f_{6,11}$ &  & $f_{6,12}$ &  & $f_{6,13}$ &  & $f_{7,2}$ &  & $f_{7,3}$ &
& $f_{8}$ & \\\hline
\texttt{normal[1]} & $2.6$ &  & $11$ &  & $6.4$ &  & $-$ &  & $-$ &  & $-$ &
\\
\texttt{locNormal[1]} & $.25$ &  & $.26$ &  & $.29$ &  & $80$ &  & $113$ &  &
$70$ & \\
\texttt{locNormal[20]} & $.21$ &  & $.22$ &  & $.24$ &  & $80$ &  & $113$ &  &
$70$ & \\
\texttt{modNormal$^{\mathtt{\ast}}$[1]} & $2.2$ & \hspace{-0.25in}$(2)$%
\hspace{-0.15in} & $.60$ & \hspace{-0.25in}$(2)$\hspace{-0.15in} & $.78$ &
\hspace{-0.25in}$(2)$\hspace{-0.15in} & $12$ & \hspace{-0.25in}$(5)$%
\hspace{-0.15in} & $17$ & \hspace{-0.25in}$(5)$\hspace{-0.15in} & $2.3$ &
\hspace{-0.15in}$(2)$\\
\texttt{modNormal$^{\mathtt{\ast}}$[10]} & $1.5$ &  & $.52$ &  & $.67$ &  &
$3.5$ &  & $4.7$ &  & $1.7$ &
\end{tabular}
\label{table timings3}
\end{table}

Note that the performance of the local approach will be considerably improved
as soon as modular primary decomposition in higher dimension will be available
in \textsc{Singular}.

\vspace{0.2cm}

Timings for the curves in $\mathbb{A}^{3}$ defined by the ideals 
\[
I_{9,k}=\left\langle Z^{3}-(19Y^{2}-23X^{2}(X+29))^{2},X^{3}-(11Y^{2}%
-13Z^{2}(Z+1))^{k}\right\rangle
\]
and the surface in $\mathbb{A}^{4}$ defined by 
\begin{align*}
I_{10}=\big\langle &Z^{2}-(Y^{3}-123456W^{2})(15791X^{2}-Y^{3})^{2}, \\
  & WZ-(1231Y^{2}-X(111X+158))\big\rangle
\end{align*}
are given in Table \ref{table timings4}.

\begin{table}[h]
\centering
\caption{Timings for curves in $\mathbb{A}^{3}$ and a surface in $\mathbb{A}^{4}$:}
\begin{tabular}
[c]{l|cc|cc|cc}
& $I_{9,1}$ &  & $I_{9,2}$ &  & $I_{10}$ & \\\hline
\texttt{normal[1]} & $3.2$ &  & $-$ &  & $150$ & \\
\texttt{locNormal[1]} & $4.2$ &  & $36$ &  & $83$ & \\
\texttt{locNormal[20]} & $4.1$ &  & $35$ &  & $82$ & \\
\texttt{modNormal[1]} & $-$ &  & $-$ &  & $28$ & \hspace{-0.15in}$(4)$\\
\texttt{modNormal[10]} & $-$ &  & $-$ &  & $14$ & \\
\texttt{modNormal$^{\mathtt{\ast}}$[1]} & $8.9$ & \hspace{-0.25in}$(5)$%
\hspace{-0.15in} & $-$ &  & $8.4$ & \hspace{-0.15in}$(4)$\\
\texttt{modNormal$^{\mathtt{\ast}}$[10]} & $2.1$ &  & $-$ &  & $2.5$ &
\end{tabular}
\label{table timings4}
\end{table}

To summarize, both the local and the probabilistic modular approach provide a
significant improvement over the GLS algorithm in computation times and size
of the examples covered. The probabilistic method is very stable in the sense
that it produces the correct result in all examples computed so far. As usual,
the verification step in the modular setup is the most time consuming task,
and a refinement of this step will be the focus of further research. The
modular technique parallelizes completely, the local approach parallelizes
best if the complexity distributes evenly over the minimal strata of the
singular locus. In general, the localization technique, even when not run in
parallel, is a major improvement to the GLS algorithm. Note, that the local
contribution can also be obtained by other means. See, for example,
\cite{BDLS} for a fast method in the case of curves, using Hensel lifting and
Puiseux series.

\end{document}